\def\le{\leqslant}
\def\ge{\geqslant}
\def\leq{\leqslant}
\def\geq{\geqslant}
\def\Z{\mathbb{Z}}
\def\R{\mathbb{R}}
\def\C{\mathbb{C}}
\def\B{\mathrm{Bier}}
\def\sK{\mathcal K}
\def\zk{\mathcal Z_{\mathcal K}}
\def\rk{\mathcal R_{\mathcal K}}
\def\zp{\mathcal{Z}_{P}}
\def\rp{\mathcal{R}_{P}}
\def\MF{\mathrm{MF}}
\newcommand{\hr}[2][]{\hyperref[#2]{#1~\ref{#2}}}
\newtheorem*{rep@theorem}{\rep@title}
\newcommand{\newreptheorem}[2]{%
\newenvironment{rep#1}[1]{%
 \def\rep@title{#2 \ref{##1}}%
 \begin{rep@theorem}}%
 {\end{rep@theorem}}}
\newtheorem{theorem}{Theorem}[section]
\newtheorem{proposition}[theorem]{Proposition}
\newtheorem{lemma}[theorem]{Lemma}
\newtheorem{prob}[theorem]{Problem}
\newtheorem{corollary}[theorem]{Corollary}
\theoremstyle{definition}
\newtheorem{example}[theorem]{Example}
\newtheorem{definition}[theorem]{Definition}
\numberwithin{equation}{section}
\DeclareMathAlphabet{\mathbbmsl}{U}{bbm}{m}{sl}
\begin{document}

\title[Bier spheres and Toric Topology]
{Bier spheres and Toric Topology}

\author[Limonchenko]{Ivan Limonchenko}
\address{Mathematical Institute of the Serbian Academy of Sciences and Arts (SASA), Belgrade, Serbia}
\email{ivan.limoncenko@turing.mi.sanu.ac.rs}

\author[Sergeev]{Matvey Sergeev}
\address{National Research University Higher School of Economics, Russian Federation} \email{matveys.studios@gmail.com}

\subjclass[2020]{13F55, 55N10, 57S12, 52B10, 52B05}

\keywords{Bier sphere, nestohedron, Delzant polytope, quasitoric manifold, small cover, Buchstaber number}

\begin{abstract}
We compute the real and complex Buchstaber numbers of an arbitrary Bier sphere.
In dimension two, we identify all the 13 different combinatorial types of Bier spheres and show
that 12 of them are nerve complexes of nestohedra, while the remaining one is a nerve complex
of a generalized permutohedron. As an application of our results, we construct a regular normal
fan for each of those 13 Delzant polytopes, compute the cohomology rings of the corresponding
nonsingular projective toric varieties, and examine the orientability of the corresponding small
covers.
\end{abstract}

\dedicatory{Dedicated to Professor Victor M. Buchstaber on the occasion of his 80th birthday}

\maketitle 

\section{Introduction}\label{section-introduction}
Since 1992, when Thomas Bier introduced a construction of a PL-sphere $\B(\sK)$ arising from an arbitrary simplicial complex $\sK$ on $[m]:=\{1,\ldots,m\}$ different from the whole simplex $\Delta_{[m]}$, see~\cite{Bier}, these triangulated spheres and their generalizations have been studied mainly in the context of topological combinatorics and combinatorial commutative algebra. The aim of this paper is to relate the geometrical and combinatorial properties of Bier spheres to the topological properties of their toric spaces arising in the framework of toric topology. 

By definition, $\B(\sK)$ is a deleted join of $\sK$ and its Alexander dual $\widehat{\sK}$; that is 
$$
\B(\sK):=\mathcal{K}*_{\Delta}\widehat{\mathcal{K}}=\{ \sigma \uplus \tau\text{ $|$ }\sigma \in \mathcal{K},\text{ }\tau\in \widehat{\mathcal{K}},\text{ }\sigma \cap \tau = \varnothing\},
$$
where $\sigma \uplus \tau := (\sigma \times \{1\})\sqcup (\tau\times \{2\})$. Apart from the one in the original note~\cite{Bier}, two different proofs of the crucial fact that for any simplicial complex $\sK\neq\Delta_{[m]}$ on $[m]$ this construction always yields an $(m-2)$-dimensional PL-sphere with a number of geometrical ('real') vertices varying between $m$ and $2m$ can be found in~\cite{Matousek} and~\cite{Longueville}. 

It follows directly from the above construction that the number of Bier spheres of a fixed dimension is finite. Moreover, it was shown in~\cite{BPSZ05} that most of the Bier spheres are non-polytopal, although the problem of constructing a particular example of a non-polytopal Bier sphere remains to be opened.  

Some of the known applications of Bier spheres in topological combinatorics are related to the fact that they have become a useful tool there thanks to their connection of their theory with the Van Kampen--Flores theorem, see~\cite{Matousek}. In addition, it was found in~\cite{ Zivaljevic19} that any Bier sphere admits a starshaped
realization. Therefore, each Bier sphere corresponds to at least one complete toric variety (possibly a singular one).

Furthermore, generalizations of Bier spheres to posets (\cite{BPSZ05}) and multicomplexes (\cite{Murai}) were proven to have nice combinatorial properties such as being shellable and edge decomposable.

Some of the known applications of Bier spheres in combinatorial commutative algebra are due to the results of~\cite{Murai} that established a relation of Bier spheres with polarizations and Alexander duality for monomial ideals.

In the framework of polytope theory, Bier spheres have been studied recently in~\cite{Zivaljevic19} and~\cite{Zivaljevic21}, where an effective criterion for their polytopality was obtained. Recently, in~\cite{Zivaljevic23}, the classical ``Steinitz problem'' in the case of Bier spheres was put into the context of game theory and related to the problem of characterizing whether a game is polytopal in the sense that the corresponding Bier sphere is polytopal.

The key result of~\cite{Zivaljevic21} relevant to our approach states that any polytopal Bier sphere is a nerve complex of a generalized permutohedron; that is, any polytopal Bier sphere is the boundary of a simplicial polytope dual to a generalized permutohedron.  

The class of generalized permutohedra was introduced in~\cite{Postnikov05} and the construction goes as follows. Let $S_{n}$ be the symmetric group on $n$ elements and fix a point $(x_{1},\ldots,x_{n})\in \mathbb{R}^{n}$. Then the corresponding classical permutohedron $P_{n}(x_{1},\ldots,x_{n})$ is defined to be a convex hull of the set $\{ x_\sigma:=(x_{\sigma(1)},\ldots,x_{\sigma(n)})\text{ }\mathrm{|} \text{ }\sigma \in S_{n} \}$. 

By definition, a generalized permutohedron is a convex hull of the set $\{ x_{\sigma}\in \mathbb{R}^{n}\text{ }\mathrm{|} \text{ }\sigma \in S_{n} \}$ such that for any adjacent transposition $\tau = (i, i+1)$ and any $\sigma \in S_{n}$, we have $x_{\sigma}-x_{\sigma \tau} = k_{\sigma, i}(e_{\sigma(i)}-e_{\sigma(i+1)})$, for $k_{\sigma,i}\geqslant 0$, where $e_{1},\ldots,e_{n}\in \mathbb{R}^{n}$ are the standard basis elements in $\mathbb{R}^{n}$. 

Each generalized permutohedron can be obtained by parallel translation of the facets of a certain classical permutohedron defined above (\cite{Postnikov05}). A particularly important subclass of generalized permutohedra is formed by all nestohedra. By definition, a nestohedron $P_{\mathcal{B}}$ is a Minkowski sum of regular simplices $\mathrm{conv}(e_i\,|\,i\in S)$, where $S$ varies over all proper elements of a building set $\mathcal{B}\subseteq 2^{[n]}$, see~\cite{Postnikov05, Postnikov-Reiner-Williams06}. 

In~\cite{Feichtner-Sturmfels04}, nerve complexes of nestohedra were introduced as nested set complexes and their combinatorial structure was studied. It turned out that each nestohedron is a Delzant polytope, which can be obtained from a simplex by cutting off some of its faces by hyperplanes in general position. In~\cite{Buchstaber-Volodin}, the case of flag nestohedra was studied and it was shown that each of them is a 2-truncated cube.

The structure of this paper is as follows. 

In Section 2 we recall all the necessary definitions related to simplicial complexes and simple polytopes. Then we show that there exist only 13 different combinatorial types of Bier spheres in dimension two. 
Since any 2-dimensional triangulated sphere is polytopal, we are able to sketch them as boundaries of the 3-dimensional simplicial polytopes and find geometric realizations of their dual simple polytopes as generalized truncation polytopes and nestohedra, whenever possible. In particular, we show that all the 13 simple polytopes arising in this way have Delzant realizations, all but one of them are nestohedra, and only 4 of them are flag polytopes (Theorem~\ref{MainClassTheo}).  

In Section 3 we briefly recall the notions of (real) moment-angle-complex and (real) Buchstaber number of the corresponding simplicial complex. These are the key notions of toric topology, for details we refer to the fundamental monograph~\cite{Buchstaber-Panov}. We construct a characteristic map for any Bier sphere, not necessarily a polytopal one, and use it to show that for any simplicial complex $\sK\neq\Delta_{[m]}$ on $[m]$, the real and complex Buchstaber numbers of its Bier sphere are both equal to $m+1$, for any $m\geq 2$ (Theorem~\ref{BuchNumTheo}).

In Section 4 we consider some examples of our main results application. We briefly recall the notions of a quasitoric manifold and a small cover and construct the canonical regular normal fans for each of the 12 nestohedra provided by our Bier spheres classification using the result of~\cite{Fenn}. Then we compute cohomology rings of the resulting 13 nonsingular projective toric varieties (Example~\ref{QuasitoricEx}) and identify orientable manifolds among the resulting 13 small covers (Example~\ref{SmallCoverEx}).

There are two appendices in the paper. In Appendix A we collect all the combinatorial information (the sets of minimal non-faces and the $f$-vectors) of the two-dimensional Bier spheres, which is necessary for their classification. In Appendix B we collect the characteristic matrices, or in geometrical terms, the regular normal fan generators, for each of the 13 simple $3$-polytopes corresponding to the combinatorial types of two-dimensional Bier spheres.


\section{Two-dimensional case: classification}\label{section 2}

In this section we are going to classify the combinatorial types of all two-dimensional Bier spheres and study their combinatorial properties.

\subsection{Preliminaries}

We start by recalling the main definitions concerned with simplicial complexes and simple polytopes that are used in what follows.

\begin{definition}
Let $V$ be a finite set. We say that $\sK\subseteq 2^V$ is a {\emph{simplicial complex on}} $V$ if the following two conditions hold:
\begin{enumerate}
\item $\varnothing\in \sK$; and
\item $\sigma\in \sK, \tau\subseteq\sigma\Rightarrow \tau\in \sK$.
\end{enumerate}
Elements of $\sK$ are called its {\emph{faces}} or {\emph{simplices}}. Faces of cardinality $1$ are called {\emph{vertices}}, faces of cardinality $2$ are called {\emph{edges}}. 

Maximal (by inclusion) simplices of $\sK$ are called its {\emph{facets}} and their set is denoted by $M(\sK)$. Minimal (by inclusion) subsets in $V$ that are not simplices of $\sK$ are called its {\emph{minimal non-faces}} and their set is denoted by $\MF(\sK)$. Minimal non-faces of cardinality $1$ are called {\emph{ghost vertices}}.

The {\emph{dimension}} of $\sK$ is one less than the maximal cardinality of a face of $\sK$ and it is denoted by $\dim \sK$. If all maximal faces of $\sK$ have the same cardinality, then $\sK$ is called {\emph{pure}}. 

Suppose $\sK$ is an $(n-1)$-dimensional simplicial complex. By the {\emph{$f$-vector}} $f(\sK)$ of $\sK$ we mean a non-negative integer vector $(f_0,\ldots,f_{n-1})$, where $f_i$ equals the number of $i$-dimensional faces of $\sK$ for $0\leq i\leq n-1$. Note that one can also set $f_{-1}=1$ for any simplicial complex. By the {\emph{$h$-vector}} $h(\sK)$ of $\sK$ we mean an integer vector $(h_0,\ldots,h_n)$ determined by the formula:
$$
h_0 t^n+\ldots+h_{n-1}t+h_n=(t-1)^n+f_0 (t-1)^{n-1}+\ldots+f_{n-1}.
$$

We say that two simplicial complexes, $\sK_1$ and $\sK_2$, are {\emph{combinatorially equivalent}}, or {\emph{isomorphic}}, if there exists a bijection between their vertex sets such that it maps simplices to simplices in both directions.
\end{definition}

Thus, $\varnothing_V$ and $\Delta_V$, the empty set singleton simplicial complex and the entire power set of $V$, are simplicial complexes on $V$ of dimensions $-1$ and $|V|-1$, respectively. Moreover, both are pure and $\MF(\Delta_V)=\varnothing$, while $\varnothing_V$ has $|V|$ ghost vertices. Another example of a pure complex is given by any {\emph{triangulated sphere}}; that is, a $q$-dimensional simplicial complex homeomorphic to the $q$-sphere.  

\begin{definition}
A simplicial complex $\sK$ is called {\emph{flag}} if one of the following equivalent conditions holds:
\begin{itemize}
\item elements of $\MF(\sK)$ have cardinalities $\leq 2$;
\item each set of vertices of $\sK$ pairwisely linked by edges in $\sK$ is itself in $\sK$.
\end{itemize}    
\end{definition}

\begin{definition}
Let $[m]:=\{1,2,\ldots,m\}$ and $[m']:=\{1',2',\ldots,m'\}$ be two ordered sets and let the map $\phi\colon i\mapsto i', 1\leq i\leq m$ be an order preserving bijection between them.

Suppose $\sK$ is a simplicial complex on $[m]$ and $\sK\neq\Delta_{[m]}$. Then 
\begin{itemize}
\item its {\emph{Alexander dual}} is a simplicial complex $\widehat{\sK}$ on $[m']$ such that 
$$
I\in \MF(\sK)\Longleftrightarrow [m']\setminus\phi(I)\in M(\widehat{\sK}).
$$
\item its {\emph{Bier sphere}} is a simplicial complex $\B(\sK)$ on $[m]\sqcup [m']$ such that 
$$
\B(\sK):=\{I\sqcup \phi(J)\,|\,I\in \sK, \phi(J)\in \widehat{\sK}, I\cap J=\varnothing\}=:\mathcal{K}*_{\Delta}\widehat{\mathcal{K}};
$$
that is, $\B(\sK)$ is defined to be the {\emph{deleted join}} of $\sK$ and $\widehat{\sK}$.
\end{itemize}
\end{definition}

The latter construction was introduced in the unpublished note ~\cite{Bier}, where it was also proved that $\B(\sK)$ is an $(m-2)$-dimensional {\emph{PL-sphere}} with a number of vertices varying between $m$ and $2m$, for any simplicial complex $\sK\neq\Delta_{[m]}$ on $[m]$. This means that there exists a subdivision of the $(m-2)$-dimensional triangulated sphere $\B(\sK)$ combinatorially equivalent to a subdivision of $\partial\Delta^{m-1}$, the boundary of the $(m-1)$-dimensional simplex. 
 
\begin{example}
Observe that, by the above definition, we have: 
$$
\B(\varnothing_{[m]})=\widehat{\varnothing_{[m]}}=\partial\Delta_{[m']}\text{ and }\B(\partial\Delta_{[m]})=\partial\Delta_{[m]},
$$
both being $(m-2)$-dimensional PL-spheres on $[m]\sqcup [m']$, with the set of vertices $[m']$ and the set of ghost vertices $[m]$ and vice versa, respectively.
\end{example}

\begin{definition}
By an $n$-dimensional {\emph{simplicial polytope}} $S$ we mean a convex hull of a set of points in $\R^n$ that are in general position. It is combinatorially dual to an $n$-dimensional {\emph{simple polytope}} $P$; that is, the supporting hyperplanes of $P$ are in general position (or equivalently, each vertex of $P$ is an intersection of precisely $n$ facets of $P$) and there exists an inclusion reversing bijection between the face lattices of $S$ and $P$.
\end{definition}

Furthermore, for any $n$-dimensional simple polytope $P$, the boundary set
$$
K_P:=\partial S=\partial P^*
$$
is a pure $(n-1)$-dimensional simplicial complex, which is known to be a PL-sphere. We call it the {\emph{nerve complex}} of $P$, since it is isomorphic to the nerve of the closed covering of $\partial P$ by its facets. Given a simple polytope $P$, we define the {\emph{$f$-vector}} $f(P)$ of $P$ to be $f(K_P)$ and the {\emph{$h$-vector}} $h(P)$ of $P$ to be $h(K_P)$.

In this paper, we are only interested in {\emph{combinatorial types}} of simplicial complexes and simple polytopes arising from Bier spheres; that is, in equivalence classes with respect to the equivalence relation of being combinatorially equivalent. Note that two simple polytopes, $P_1$ and $P_2$, are {\emph{combinatorially equivalent}}, that is, their face lattices are isomorphic, if and only if their nerve complexes, $K_{P_1}$ and $K_{P_2}$, are isomorphic as simplicial complexes. Furthermore, for any simple polytope $P$, the two integer vectors, $f(P)$ and $h(P)$, are combinatorial invariants of $P$.

Now we are ready to turn to the classification of combinatorial types of the two-dimensional Bier spheres.

\subsection{Combinatorial types of \texorpdfstring{$\B_4(\sK)$}{Lg}}

The classification will be performed by sorting through all possible simplicial complexes on the set $[4]=\{1,2,3,4\}$. As is well-known, all 2-dimensional triangulated spheres are polytopal, hence we can talk about simplicial Bier spheres $\partial S=\partial P^*$ and the corresponding simple {\emph{Bier polytopes}} $P$ interchangeably in this context.

Recall that we allow simplicial complexes to have ghost vertices. By virtue of the equality $\mathrm{Bier}(\mathcal{K})=\mathrm{Bier}(\widehat{\mathcal{K}})$, it is enough for our purposes to check simplicial complexes up to combinatorial equivalence and Alexander duality. 

The combinatorial types of two-dimensional Bier spheres are identified based on the following three simple observations:
\begin{enumerate}
    \item We can distinguish them by means of $f$- and $h$-vectors, being combinatorial invariants of simplicial complexes. Moreover, a flag polytope cannot be isomorphic to a non-flag polytope, so we compare their $f$- and $h$-vectors separately;
    \item For polytopes with the same $f$- or $h$-vectors, we compare the numbers of elements in the sets of minimal non-faces of their nerve complexes;
    \item If all simple $3$-cycles of two simplicial $3$-polytopes are the boundaries of their faces, then the isomorphism of their graphs entails their simplicial isomorphism.
\end{enumerate}

Denote by $x_{i}$ labeled vertices of $\mathcal{K}$, by $y_{i}$ -- labeled vertices of $\widehat{\mathcal{K}}$. The following well-known result allows us to describe the set of minimal non-faces of a Bier sphere; we include a proof for the sake of completeness.

\begin{proposition}\hypertarget{propMF}
Let $\{x_{1},\ldots,x_{m}\}$ and $\{y_{1},\ldots,y_{m}\}$ be sets of labeled vertices of simplicial complexes $\mathcal{K}$ and $\widehat{\mathcal{K}}$ respectively, and the vertices $x_{i}$, $y_{i}$ correspond to the same vertex $\{i\}$. Then we have:
$$
\mathrm{MF}(\mathrm{Bier}(\mathcal{K}))=\mathrm{MF}(\mathcal{K})\sqcup \mathrm{MF}(\widehat{\mathcal{K}})\sqcup \{ x_{i_{1}}y_{i_{1}}, \ldots, x_{i_{n}}y_{i_{n}}\},
$$ 
for some $n\le m$, where $x_{i_{k}}y_{i_{k}}$ for  $k=1,\ldots,n$ are all the pairs of vertices, for which both $x_{i_{k}},y_{i_{k}}$ are not ghost.
\end{proposition}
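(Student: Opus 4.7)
The plan is to establish the two inclusions separately, using the explicit description of the Bier sphere as a deleted join.

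First I would verify the easy inclusion $\supseteq$. For each of the three candidate families I check directly that it is a minimal non-face of $\mathrm{Bier}(\mathcal{K})$. If $I\in \mathrm{MF}(\mathcal{K})$, viewed as a subset of the $x$-vertices, then writing $I = A\sqcup \phi(\varnothing)$ shows $A\notin \mathcal{K}$ so $I\notin \mathrm{Bier}(\mathcal{K})$; any proper subset $A'\subsetneq A$ satisfies $A'\in \mathcal{K}$, $\phi(\varnothing)\in \widehat{\mathcal{K}}$, and $A'\cap \varnothing = \varnothing$, so $A'\in \mathrm{Bier}(\mathcal{K})$. The case $\mathrm{MF}(\widehat{\mathcal{K}})$ is symmetric. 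For a pair $x_i y_i$ with both $x_i,y_i$ non-ghost, I write it as $\{i\}\sqcup\phi(\{i\})$: the intersection condition fails so it is not a face, but the two singletons $\{x_i\}$ and $\{y_i\}$ are faces (using precisely that $\{i\}\in \mathcal{K}$ and $\{i\}\in \widehat{\mathcal{K}}$, i.e.\ $i$ is not ghost in either complex), and $\varnothing$ is a face.

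For the harder inclusion $\subseteq$, I take an arbitrary minimal non-face $\sigma\in \mathrm{MF}(\mathrm{Bier}(\mathcal{K}))$ and decompose it as $\sigma = A\sqcup \phi(B)$ with $A,B\subseteq [m]$. Since $\sigma\notin \mathrm{Bier}(\mathcal{K})$, at least one of the defining conditions fails: (a) $A\notin \mathcal{K}$, or (b) $\phi(B)\notin \widehat{\mathcal{K}}$, or (c) $A\cap B\neq \varnothing$. I would treat case (c) first: if some $i\in A\cap B$, then the two-element subset $\{x_i,y_i\}\subseteq \sigma$ already violates the intersection condition, hence is a non-face; minimality of $\sigma$ forces $\sigma = \{x_i,y_i\}$, and then the singletons $\{x_i\},\{y_i\}$ must be faces, which means $\{i\}\in \mathcal{K}\cap \widehat{\mathcal{K}}$, placing $\sigma$ in the third family.

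If (c) does not hold, then $A\cap B=\varnothing$ and at least one of (a),(b) holds. If (a) holds, I argue $B=\varnothing$: otherwise, removing any $y_j$ with $j\in B$ yields a proper subset $A\sqcup \phi(B\setminus\{j\})$ whose $x$-part is still $A\notin \mathcal{K}$, so this proper subset is again a non-face, contradicting minimality of $\sigma$. Hence $\sigma = A$, and minimality then reads exactly as: $A\notin \mathcal{K}$ while every proper subset of $A$ lies in $\mathcal{K}$, i.e.\ $A\in \mathrm{MF}(\mathcal{K})$. Case (b) is handled by the symmetric argument on the $x$-side, yielding $\sigma = \phi(B)\in \mathrm{MF}(\widehat{\mathcal{K}})$. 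The three families are disjoint (they live in $\{x_i\}$-only, $\{y_i\}$-only, or mixed-pair strata of $[m]\sqcup [m']$), so the union is in fact disjoint. The only subtle step is the reduction to $\sigma = \{x_i,y_i\}$ in case (c); everything else is a direct application of minimality, so I do not expect any serious obstacle.
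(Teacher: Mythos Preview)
Your proof is correct and follows essentially the same approach as the paper's. The only difference is organizational: the paper splits into the dichotomy ``$\sigma\uplus\tau\notin\mathcal{K}*\widehat{\mathcal{K}}$'' versus ``$\sigma\uplus\tau\in\mathcal{K}*\widehat{\mathcal{K}}$ with $\sigma\cap\tau\neq\varnothing$'', whereas you use the trichotomy (a)/(b)/(c) and dispose of (c) first; the minimality arguments used to reduce to the three families are identical in substance.
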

\begin{proof}
Let us describe the minimal non-faces of $\mathcal{K}*_{\Delta}\widehat{\mathcal{K}}$. These are $\sigma \uplus \tau$ such that either $\sigma \uplus \tau \notin \mathcal{K}*\widehat{\mathcal{K}}$, or $\sigma\uplus\tau\in \mathcal{K}*\widehat{\mathcal{K}}$ and $\sigma\cap\tau\ne\varnothing$.

If $\sigma \uplus \tau \in \mathcal{K}*\widehat{\mathcal{K}}$ and $\sigma\cap\tau\ne\varnothing$, then the minimality of this non-face means that any proper face of $\sigma$ does not intersect any (not necessarily proper) face of $\tau$ and vice versa. This is possible if and only if $\sigma = \tau = \{i\}$ for a vertex $\{i\}$ that belongs to $\mathcal{K}$ and $\widehat{\mathcal{K}}$. Thus, we get $\{x_{i},y_{i}\} = x_{i}y_{i} \in \mathrm{MF}(\mathrm{Bier}(\mathcal{K}))$. Note that if $\{i\}$ is a ghost vertex for $\mathcal{K}$ or $\widehat{\mathcal{K}}$, then $x_{i}y_{i}\notin \mathrm{MF}(\mathrm{Bier}(\mathcal{K}))$.

If $\sigma \uplus \tau \notin \mathcal{K}*\widehat{\mathcal{K}}$, then we get either $\sigma\notin \mathcal{K}$ and $\tau\in \widehat{\mathcal{K}}$, or $\sigma \in \mathcal{K}$ and $\tau\notin \widehat{\mathcal{K}}$. In the former case, the minimality means $\sigma\in\mathrm{MF}(\mathcal{K})$, $\tau = \varnothing$, and any proper face of $\sigma$ does not intersect $\tau$ automatically.  In the latter case, the minimality similarly means that $\tau\in\mathrm{MF}(\widehat{\mathcal{K}})$ and $\sigma=\varnothing$.
\end{proof}

\begin{corollary}
Let $\sK$ be an arbitrary simplicial complex on $V$. Then the number of ghost vertices of $\B(\mathcal{K})$ is equal to $|V|-f_0(\sK)+f_{|V|-2}(\sK)$.
\end{corollary}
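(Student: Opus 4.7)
The plan is to read off the count of ghost vertices of $\B(\sK)$ directly from the classification of $\MF(\B(\sK))$ in the preceding proposition. By definition, a ghost vertex of $\B(\sK)$ is a singleton member of $\MF(\B(\sK))$, and the proposition expresses $\MF(\B(\sK))$ as the disjoint union of $\MF(\sK)$, $\MF(\widehat{\sK})$, and a set of two-element subsets $\{x_{i_k}y_{i_k}\}$. Since the third summand contributes only pairs, every singleton in $\MF(\B(\sK))$ comes from $\MF(\sK)\sqcup\MF(\widehat{\sK})$, so it suffices to count each piece separately.

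The singletons in $\MF(\sK)$ are by definition the ghost vertices of $\sK$, and there are exactly $|V|-f_0(\sK)$ of them. To count singletons in $\MF(\widehat{\sK})$, I would unpack the defining relation $I\in\MF(\sK)\Longleftrightarrow[m']\setminus\phi(I)\in M(\widehat{\sK})$: a vertex $\{j'\}$ lies in $\widehat{\sK}$ precisely when it is contained in some facet $[m']\setminus\phi(I)$, i.e., when some minimal non-face of $\sK$ omits the index $j$. Contrapositively, $\{j'\}$ is a ghost vertex of $\widehat{\sK}$ exactly when every $I\in\MF(\sK)$ contains $j$, which in turn is equivalent to $V\setminus\{j\}$ being a face of $\sK$. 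The number of such $j$ is precisely the number of faces of $\sK$ of cardinality $f_0(\sK)-1$, namely $f_{f_0(\sK)-2}(\sK)$, with the convention $f_{-1}=1$ handling the degenerate case $\sK=\varnothing_V$. Summing the two contributions yields the stated formula.

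The only delicate step I anticipate is the Alexander duality chase in the middle paragraph---translating ``$\{j'\}\in\widehat{\sK}$'' into a condition on $\MF(\sK)$ and then into a condition on the $f$-vector of $\sK$; once that chain of equivalences is established, the rest is pure bookkeeping on $f$-vectors.
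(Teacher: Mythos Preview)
Your argument tracks the paper's exactly: decompose $\MF(\B(\sK))$ via the proposition, note that the two-element pairs contribute no singletons, and count the ghost vertices coming from $\sK$ and from $\widehat\sK$ separately. Through the equivalence ``$\{j'\}$ is a ghost vertex of $\widehat\sK$ if and only if $V\setminus\{j\}\in\sK$'' your reasoning is correct and matches the paper's.

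The gap is the next sentence, where you claim that the number of such $j$ equals the number of faces of $\sK$ of cardinality $f_0(\sK)-1$. The set $V\setminus\{j\}$ has cardinality $|V|-1$, not $f_0(\sK)-1$; these agree only when $\sK$ has no ghost vertices. For a concrete failure, take $V=\{1,2,3\}$ and $\sK=\Delta_{\{1,2\}}$ with $3$ a ghost: then $\widehat\sK$ has the single ghost vertex $3'$, so $\B(\sK)$ has exactly two ghost vertices, whereas the stated formula gives $3-2+f_{0}(\sK)=3$. In fact the corollary as printed carries a misprint---the index should be $|V|-2$ rather than $f_0(\sK)-2$---and the paper's own proof makes the identical slip; with that correction both your argument and the paper's go through verbatim. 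Your side remark that the convention $f_{-1}=1$ handles $\sK=\varnothing_V$ is also slightly off: in that case $f_0(\sK)=0$ and the relevant term is $f_{-2}$, not $f_{-1}$.
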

\begin{proof}
Indeed, the ghost vertices of the Bier sphere $\B(\sK)$ are exactly the singleton elements
of $\MF(\B(\mathcal{K}))$. Those, in turn, are the singleton elements of either $\mathrm{MF}(\mathcal{K})$ or $\mathrm{MF}(\widehat{\mathcal{K}})$, by the
previous result. By definition, the simplices of $\widehat{\mathcal{K}}$ are the complements of non-simplices of $\mathcal{K}$. Thus,
the ghost vertices of $\widehat{\mathcal{K}}$ are exactly the complements of the faces of $\mathcal{K}$ of cardinality $|V|-1$.
\end{proof}

Observe that there are exactly $28$ simplicial complexes on $4$ vertices (without counting the full simplex itself).
\begin{center}
    \input{name}
\end{center}
The Bier polytopes corresponding to $\mathrm{Bier}_{4}(\mathcal{K}_{11})$ and $\mathrm{Bier}_{4}(\mathcal{K}_{8})$ are both combinatorially equivalent to the 3-cube with one vertex cut. 
Thus, these two spheres are isomorphic. 

Next, it is easy to see that every $3$-cycle of $\mathrm{Bier}_{4}(\mathcal{K}_{6})$ and $\mathrm{Bier}_{4}(\mathcal{K}_{4})$ is a boundary of some face. By checking the isomorphism of their graphs, we obtain the isomorphism 
$$
\mathrm{Bier}_{4}(\mathcal{K}_{6}) \to \mathrm{Bier}_{4}(\mathcal{K}_{4}),\text{ } x_{1}\mapsto y_{4},x_{2}\mapsto x_{3},x_{3}\mapsto x_{4},x_{4}\mapsto y_{3},y_{1}\mapsto y_{2},y_{2}\mapsto x_{2},y_{3}\mapsto x_{1},y_{4}\mapsto y_{1}.
$$ 

Furthermore, $\mathrm{Bier}_{4}(\mathcal{K}_{9})$ and $\mathrm{Bier}_{4}(\mathcal{K}_{13})$ are also isomorphic: 
$$
\mathrm{Bier}_{4}(\mathcal{K}_{9})\to \mathrm{Bier}_{4}(\mathcal{K}_{13}),\text{ }x_{1}\mapsto x_{2},\text{ }x_{2}\mapsto x_{3},\text{ }x_{3}\mapsto y_{1},\text{ }y_{1}\mapsto y_{2},\text{ }y_{2}\mapsto x_{4},\text{ }y_{3}\mapsto x_{1}.
$$
Starting from now, we denote
\begin{center}
$$
\EuScript{S}_{1}:=\mathrm{Bier}_{4}(\mathcal{K}_{1}),\quad
\EuScript{S}_{2}:=\mathrm{Bier}_{4}(\mathcal{K}_{2}),\quad \EuScript{S}_{3}:=\mathrm{Bier}_{4}(\mathcal{K}_{3}),$$
$$
\EuScript{S}_{4}:=\mathrm{Bier}_{4}(\mathcal{K}_{4})\cong \mathrm{Bier}_{4}(\mathcal{K}_{6}),\quad 
\EuScript{S}_{5}:=\mathrm{Bier}_{4}(\mathcal{K}_{5}),\quad \EuScript{S}_{6}:=\mathrm{Bier}_{4}(\mathcal{K}_{7}),$$
$$
\EuScript{S}_{7}:=\mathrm{Bier}_{4}(\mathcal{K}_{8})\cong \mathrm{Bier}_{4}(\mathcal{K}_{11}),\quad 
\EuScript{S}_{8}:=\mathrm{Bier}_{4}(\mathcal{K}_{10}),\quad \EuScript{S}_{9}:=\mathrm{Bier}_{4}(\mathcal{K}_{12}),$$
$$
\EuScript{S}_{10}:=\mathrm{Bier}_{4}(\mathcal{K}_{13})\cong \mathrm{Bier}_{4}(\mathcal{K}_{9}),\quad 
\EuScript{S}_{11}:=\mathrm{Bier}_{4}(\mathcal{K}_{14}),\quad 
\EuScript{S}_{12}:=\mathrm{Bier}_{4}(\mathcal{K}_{15}),\quad 
\EuScript{S}_{13}:=\mathrm{Bier}_{4}(\mathcal{K}_{16}).
$$
\end{center}
Finally, set $\EuScript{P}_{i}$ for the Bier polytope corresponding to $\EuScript{S}_{i}$; that is, $\EuScript{P}_{i}$ is the simple 3-polytope having $\EuScript{S}_{i}$ as its nerve complex, for each $1\leq i\leq 13$.

To complete our combinatorial type classification we shall show that the 13 triangulated spheres above are pairwisely not combinatorially equivalent. In order to do this, we shall make use of the combinatorial data we collected in \hyperlink{Appendix A}{Appendix A}. 

Firstly, note that the simple polytope $\EuScript{P}_{4}$ has pentagonal faces, while $\EuScript{P}_{5}$ does not. Therefore the Bier sphere $\EuScript{S}_{4}$ is not combinatorially equivalent to the Bier sphere $\EuScript{S}_{5}$. 

Secondly, observe that the triangulated spheres $\EuScript{S}_{1},\EuScript{S}_{2},\EuScript{S}_{3}$ can be distinguished pairwisely by the numbers of elements in their sets of minimal non-faces. The same is true for the Bier spheres $\EuScript{S}_{7}$ and $\EuScript{S}_{9}$.

Finally, note that the triangulated sphere $\EuScript{S}_{6}$ is not isomorphic to the triangulated sphere $\EuScript{S}_{2}$, since $\EuScript{P}_{2}$ has a hexagonal face, while $\EuScript{P}_{6}$ does not. 

This finishes the classification of combinatorial types of all the two-dimensional Bier spheres. The thirteen resulting Bier polytopes and their duals are realized as convex polytopes in the 3-space in the next subsection. 

\subsection{Affine realizations and generalized truncation polytopes}

Here we sketch the affine realizations of $\EuScript{S}_{i}$ and $\EuScript{P}_{i}$ in $\mathbb{R}^{3}$, for all $1\leq i\leq 13$, see \hyperlink{Table 2}{Table 2}.

\hypertarget{Table 2}{
\input{union_table}
}

The following class of simple polytopes was introduced and studied in the framework of toric topology in~\cite{Limonchenko14}.

\begin{definition}
By a {\emph{generalized truncation polytope}} of the type $vc^k(\Delta^{n_1}\times\ldots\times\Delta^{n_r})$ for $k\geq 0$ and $n_1\geq\ldots\geq n_r\geq 1$ we mean (a representative of) the combinatorial type of a simple polytope $P$ being the result of a consecutive cut of $k$ vertices by hyperplanes in general position, starting from the product of simplices $\Delta^{n_1}\times\ldots\times\Delta^{n_r}$.

When $r=1$, we call $P$ a {\emph{truncation polytope}} and its dual $S=P^*$ -- a {\emph{stacked polytope}}.
\end{definition}

Note that all simple 3-dimensional polytopes are face truncations of the 3-simplex $\Delta^{3}$. A particularly important is the following class of flag polytopes, introduced in~\cite{Buchstaber-Volodin}. 

\begin{definition}
We say that a simple polytope $P$ is an $n$-dimensional {\emph{2-truncated cube}}, if it can be obtained from the $n$-cube $I^n$ by consecutively cutting off faces of codimension-2 by hyperplanes in general position.    
\end{definition}

Observe that a face truncation of a cube is flag if and only if it is a 2-truncated cube.

Now we return back to our classification of Bier spheres. 

One immediately sees that: $\EuScript{P}_{1}$ is $vc^{4}(\Delta^{3})$, $\EuScript{P}_{2}$ is $vc^{2}(\Delta^1\times\Delta^1\times\Delta^1)$ (cut off two adjacent vertices of the $3$-cube), $\EuScript{P}_{3}$ is a vertex cut from a $5$-angle prism (took the $3$-cube and cut off its edge first, then the vertex), $\EuScript{P}_{4}$ is $2$-truncated cube (cut off two adjacent edges of the $3$-cube; as a result, the pentagonal face $x_{4}x_{2}x_{6}x_{10}x_{9}$ is formed), $\EuScript{P}_{5}$ is $2$-truncated cube (cut off two parallel edges from one facet of the $3$-cube), $\EuScript{P}_{6}$ is $vc^{2}(\Delta^{1}\times\Delta^{1}\times\Delta^{1})$ (cut off two opposite vertices of the $3$-cube). 

Furthermore, $\EuScript{P}_{7}$ is $vc^{1}(\Delta^{1}\times\Delta^{1}\times\Delta^{1})$, $\EuScript{P}_{8}$ is $2$-truncated cube (with one $2$-truncation performed), $\EuScript{P}_{9}$ is $vc^{3}(\Delta^{3})$, $\EuScript{P}_{10}=\Delta^{1}\times\Delta^{1}\times\Delta^{1}$, $\EuScript{P}_{11}$ is $vc^{2}(\Delta^{3})$, $\EuScript{P}_{12}$ is $vc^{1}(\Delta^{3})$, $\EuScript{P}_{13}=\Delta^{3}$.

\subsection{Delzant realizations and nestohedra}

We start with the following definition.

\begin{definition}
A \emph{Delzant polytope} is a lattice polytope such that its normal fan is regular; i.e., the primitive normal vectors to facets meeting at a vertex form a lattice basis for each vertex of the lattice polytope. A \emph{Delzant realization} of a simple polytope $P$ is an arbitrary Delzant polytope which is combinatorially equivalent to $P$.
\end{definition}

It was shown in~\cite{Del05} that if a 3-polytope has a Delzant realization, then it has at least
one triangular or quadrangular facet. Two classes of Delzant polytopes are extremely important for our
classification: nestohedra and 2-truncated cubes. 

Let $S_{n}$ be the symmetric group on $n$ elements and $(x_{1},\ldots,x_{n})\in \mathbb{R}^{n}$. Recall that the corresponding {\emph{classical permutohedron}} $P_{n}(x_{1},\ldots,x_{n})$ is a convex hull of the set $\{x_{\sigma}:=(x_{\sigma(1)},\ldots,x_{\sigma(n)})\text{ }\mathrm{|} \text{ }\sigma \in S_{n} \}$. 

By definition, a {\emph{generalized permutohedron}} is a convex hull of the set 
$$
\{ x_{\sigma}\in \mathbb{R}^{n}\text{ }\mathrm{|} \text{ }\sigma \in S_{n} \}
$$ 
such that for any adjacent transposition $\tau = (i, i+1)$ and any $\sigma \in S_{n}$, we have 
$$
x_{\sigma}-x_{\sigma \tau} = k_{\sigma, i}(e_{\sigma(i)}-e_{\sigma(i+1)}),\text{ for } k_{\sigma,i}\geqslant 0,
$$
where $e_{1},\ldots,e_{n}\in \mathbb{R}^{n}$ are the standard basis elements in $\mathbb{R}^{n}$. 

It follows that each generalized permutohedron can be obtained by parallel translation of the facets of a certain classical permutohedron. A particularly important subclass of generalized permutohedra is formed by all nestohedra. 

By definition, a {\emph{nestohedron}} $P_{\mathcal{B}}$ is a Minkowski sum of regular simplices $\mathrm{conv}(e_i\,|\,i\in S)$, where $S$ varies over all proper elements of a building set $\mathcal{B}\subseteq 2^{[n+1]}$, see~\cite{Postnikov05, Postnikov-Reiner-Williams06}. A {\emph{building set}} satisfies the following two conditions:
\begin{enumerate}
\item If $S_{1},S_{2}\in \mathcal{B}$ such that $S_{1}\cap S_{2}\ne \varnothing$, then $S_{1}\cup S_{2}\in \mathcal{B}$;
\item $\{i\}\in \mathcal{B}$ for all $i\in [n+1]$.
\end{enumerate}
By definition, a nestohedron $P_{\mathcal{B}}$ is a lattice polytope with respect to the lattice $$\{(x_{1},\dots, x_{n+1})\in \mathbb{Z}^{n+1}\text{ | } \sum x_i = |\mathcal{B}| \}.$$

A building set $\mathcal{B}\subset 2^{[n+1]}$ is called \emph{connected} if $[n+1]\in \mathcal{B}$. It follows from the Erokhovets construction (see. ~\cite[Construction 4.1]{Buchstaber-Volodin}) that each nestohedron can be realized with a connected building set, see~\cite[Corollary 4.5]{Buchstaber-Volodin}. Therefore, without loss of generality, we can restrict ourselves to nestohedra defined on connected building sets.

The geometric description of the faces of a nestohedron is well-known ~\cite[Proposition 7.5]{Postnikov05}
\begin{proposition}[\cite{Postnikov05}]
Let $\mathcal{B}$ be a connected building set on $[n+1]$. Then
nestohedron $P_{\mathcal{B}}$ is an $n$-dimensional simple polytope
equal to the intersection of the hyperplane  $H=\{(x_{1},\dots,
x_{n+1})\in \mathbb{R}^{n+1}\emph{\text{ | }}
\sum^{n+1}_{i=1}x_{i}=|\mathcal{B}|\}$ with the half space
$H_{S}=\{(x_{1},\dots, x_{n+1})\in \mathbb{R}^{n+1}\emph{\text{ |
}}\sum_{i\in S}x_{i} \ge |\mathcal{B}|_{S}| \}$, where
$\mathcal{B}|_{S}=\{S'\in \mathcal{B}\emph{\text{ | }}S'\subset S\}$.
Moreover, there is a one to one correspondence between facets of
$P_{\mathcal{B}}$ and elements of $\mathcal{B}\backslash \{[n+1]\}$,
such that $S\in \mathcal{B}$ corresponds to the facet
$F_{S}=P_{\mathcal{B}}\cap \partial H_{S}$.
\end{proposition}

In~\cite{Feichtner-Sturmfels04}, an alternative approach was implemented, leading to the description of the nerve complexes of nestohedra, which appeared there as {\emph{nested set complexes}}.

The following lemma is crucial for us~\cite[Theorem 4.2]{Feichtner-Sturmfels04}. We follow the notations of~\cite[Lemma 6.4]{Buchstaber-Volodin}.
\begin{lemma}[\cite{Feichtner-Sturmfels04, Buchstaber-Volodin, Buchstaber-Panov}]\label{lemma}
 Let $\mathcal{B}_{0}\subset \mathcal{B}_{1}$ be connected building sets on $[n+1]$. Then $P_{\mathcal{B}_{1}}$ is obtained from $P_{\mathcal{B}_{0}}$ by a sequence of truncations of the faces $G_{i}=\bigcap^{k_{i}}_{j=1}F_{S_{j}^{i}}$ corresponding to the decompositions $S^{i}=S^{i}_{1}\sqcup...\sqcup S^{i}_{k_{i}}$ of elements $S^{i}\in \mathcal{B}_{1}\backslash \mathcal{B}_{0}$ numbered in
any order that is reverse to inclusion (i.e. $S^{i}\supseteq S^{i'}\Rightarrow i\le i'$).  
\end{lemma}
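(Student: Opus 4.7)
The plan is to reduce to a single-step assertion by induction on $N:=|\mathcal{B}_{1}\setminus \mathcal{B}_{0}|$. The case $N=0$ is vacuous. For the inductive step, I would choose an element $S\in \mathcal{B}_{1}\setminus \mathcal{B}_{0}$ that is maximal with respect to inclusion, and show first that $\mathcal{B}':=\mathcal{B}_{0}\cup\{S\}$ is itself a connected building set. Indeed, for $T\in \mathcal{B}_{0}$ with $T\cap S\ne\varnothing$, the set $T\cup S$ lies in $\mathcal{B}_{1}$; if it strictly contains $S$ then the maximality of $S$ inside $\mathcal{B}_{1}\setminus \mathcal{B}_{0}$ forces $T\cup S\in \mathcal{B}_{0}$, and otherwise $T\cup S=S\in \mathcal{B}'$. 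Connectedness is inherited from $\mathcal{B}_{0}$. Applying the inductive hypothesis to the inclusion $\mathcal{B}'\subset \mathcal{B}_{1}$ gives the remaining $N-1$ truncations; the choice of $S$ as a maximal element ensures that the cumulative ordering of truncations is reverse to inclusion.

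It therefore suffices to establish the single-step claim: if $\mathcal{B}_{0}\subset \mathcal{B}_{0}\cup\{S\}$ are connected building sets, then $P_{\mathcal{B}_{0}\cup\{S\}}$ is obtained from $P_{\mathcal{B}_{0}}$ by truncating the face $G=F_{S_{1}}\cap\dots\cap F_{S_{k}}$, where $S=S_{1}\sqcup\dots\sqcup S_{k}$ is the (unique) decomposition of $S$ into maximal elements of $\mathcal{B}_{0}$ contained in $S$. The first combinatorial input is the identity $\mathcal{B}_{0}|_{S}=\bigsqcup_{j} \mathcal{B}_{0}|_{S_{j}}$: any $T\in \mathcal{B}_{0}$ with $T\subset S$ that intersected two distinct $S_{j}, S_{j'}$ would, by the building set axiom and maximality of the $S_{j}$'s, yield $T\cup S_{j}\in\mathcal{B}_{0}$ strictly larger than $S_{j}$ but still inside $S$, a contradiction. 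This yields $|\mathcal{B}_{0}|_{S}|=\sum_{j}|\mathcal{B}_{0}|_{S_{j}}|$.

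The second ingredient is the explicit H-representation from the proposition. On the face $G\subset P_{\mathcal{B}_{0}}$ the defining inequalities for $S_{1},\dots,S_{k}$ are tight, so
$$\sum_{i\in S}x_{i}=\sum_{j=1}^{k}\sum_{i\in S_{j}}x_{i}=\sum_{j=1}^{k}|\mathcal{B}_{0}|_{S_{j}}|=|\mathcal{B}_{0}|_{S}|;$$
whereas at every point of $P_{\mathcal{B}_{0}}$ off $G$ at least one inequality is strict, so $\sum_{i\in S}x_{i}>|\mathcal{B}_{0}|_{S}|$. After the harmless affine shift accounting for the increment $|\mathcal{B}_{0}\cup\{S\}|=|\mathcal{B}_{0}|+1$, the only new facet constraint introduced in passing from $P_{\mathcal{B}_{0}}$ to $P_{\mathcal{B}_{0}\cup\{S\}}$ is $\sum_{i\in S}x_{i}\ge |\mathcal{B}_{0}|_{S}|+1$, whose bounding hyperplane cuts off precisely a neighbourhood of $G$. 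All other facets persist because the H-representation is irredundant for a connected building set.

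The remaining work, which I expect to be the main obstacle, is verifying that this hyperplane intersects $P_{\mathcal{B}_{0}}$ in general position with respect to the facets meeting $G$, so that the operation is indeed a combinatorial face truncation producing a simple polytope. The natural way to do this is to exhibit $P_{\mathcal{B}_{0}\cup\{S\}}=P_{\mathcal{B}_{0}}+\Delta_{S}$ as a Minkowski sum (following Postnikov), and to use that $\Delta_{S}$ is a regular simplex in general position relative to the face of $P_{\mathcal{B}_{0}}$ that minimises the linear functional $\sum_{i\in S}x_{i}$, namely $G$. Combined with the tautological identification of the nerve complex of $P_{\mathcal{B}}$ with the nested set complex of $\mathcal{B}$, this identifies the truncated facet lattice with the facet lattice of $P_{\mathcal{B}_{0}\cup\{S\}}$ and completes the single-step argument.
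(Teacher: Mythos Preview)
The paper does not supply its own proof of this lemma; it is quoted as a known result with citations to Feichtner--Sturmfels, Buchstaber--Volodin, and Buchstaber--Panov. Your proposed argument --- induction on $|\mathcal{B}_{1}\setminus\mathcal{B}_{0}|$, choice of a maximal $S$, verification that $\mathcal{B}_{0}\cup\{S\}$ is again a connected building set, decomposition $\mathcal{B}_{0}|_{S}=\bigsqcup_{j}\mathcal{B}_{0}|_{S_{j}}$, and the Minkowski-sum interpretation $P_{\mathcal{B}_{0}\cup\{S\}}=P_{\mathcal{B}_{0}}+\Delta_{S}$ to identify the new inequality as a face truncation --- is exactly the route taken in those references (see in particular \cite[Lemma~1.5.17, Theorem~1.5.18]{Buchstaber-Panov} and \cite[\S6]{Buchstaber-Volodin}). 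So there is no divergence to report: your outline matches the standard proof the paper is invoking, including the point you flag as the main obstacle, which in the literature is handled precisely via the Minkowski-sum description.
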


It follows from the last construction that any nestohedron can be obtained by face truncations of a simplex. Hence, every nestohedron is a Delzant polytope. So, every simple polytope $P$ is either combinatorially equivalent to some nestohedron (in this situation the nestohedron appears to be the “canonical” Delzant realization of $P$), or does not admit a realization as a nestohedron (then a Delzant realization might not exist).

The crucial result relating Bier spheres and nestohedra is the next one, see~\cite[Сorollary 19]{Zivaljevic21}.

\begin{theorem}[\cite{Zivaljevic21}]
Any polytopal Bier sphere is a nerve complex of a generalized permutohedron.
\end{theorem}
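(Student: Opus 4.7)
The strategy is to realize the dual simple polytope $P=\B(\sK)^*$ explicitly as a generalized permutohedron by specifying facet outward normals of the subset-indicator form $e_S=\sum_{i\in S}e_i$. I would identify the facets of $P$ with the (non-ghost) vertices of $\B(\sK)$: each $x_i$ corresponds to a facet $F_{x_i}$ with outward normal $e_{\{i\}}\in\mathbb{R}^m$, and each $y_j$ corresponds to a facet $F_{y_j}$ with outward normal $-e_{\{j\}}$. After restriction to an ambient hyperplane $\sum_i x_i=c$, both $e_{\{i\}}$ and $-e_{\{j\}}\equiv e_{[m]\setminus\{j\}}$ lie among the legitimate facet normals of the classical permutohedron $P_m$, so $P$ will sit inside the deformation cone of $P_m$ and hence be a generalized permutohedron by definition.

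First I would verify that this assignment is combinatorially consistent with the face structure of $\B(\sK)$. The key input is the Proposition proved earlier, giving $\MF(\B(\sK))=\MF(\sK)\sqcup\MF(\widehat{\sK})\sqcup\{\{x_i,y_i\}:i\text{ not ghost}\}$. Non-faces of the third type correspond precisely to antipodal normal pairs $e_{\{i\}},-e_{\{i\}}$, which never meet in a common face of any convex polytope; this matches the braid-fan constraint that facets indexed by $\{i\}$ and $[m]\setminus\{i\}$ are non-adjacent in $P_m$. The $x$-only and $y$-only non-faces inherited from $\MF(\sK)$ and $\MF(\widehat{\sK})$ impose further compatibility constraints within the $e_{\{i\}}$-sector and the $-e_{\{j\}}$-sector respectively, and collapse neighbouring chambers of the braid fan into coarser cones.

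Next I would exhibit explicit support numbers $a_i,b_j$ realizing
\[
P=\Bigl\{x\in\mathbb{R}^m\;\Big|\;\textstyle\sum_i x_i=c,\ x_i\leq a_i\text{ for non-ghost }x_i,\ -x_j\leq b_j\text{ for non-ghost }y_j\Bigr\}
\]
as a simple polytope whose normal fan is precisely the prescribed coarsening of the braid fan. Polytopality of $\B(\sK)$ enters here to guarantee the existence of a strictly convex support function for this fan: starting from any convex realization of $\B(\sK)^*$, one deforms its facet normals into the prescribed $\pm e_i$ directions within the deformation cone of $P_m$, preserving the combinatorial type along the way. Equivalently, one verifies via Postnikov's submodularity characterization that the function $S\mapsto a_S$ assembling the support values on all non-empty proper subsets $S\subsetneq[m]$ can be chosen submodular, with strict submodularity on precisely those pairs whose associated facets are adjacent in $\B(\sK)$.

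The main obstacle, as I see it, is in the second step: checking that the combinatorial cone complex obtained by assigning $\pm e_i$ normals and then coarsening according to $\MF(\sK)\sqcup\MF(\widehat{\sK})$ indeed reproduces the entire face lattice of $\B(\sK)$ rather than some proper coarsening or refinement. This ultimately reduces to a careful unwinding of the deleted-join construction $\mathcal{K}*_\Delta\widehat{\mathcal{K}}$ in terms of Alexander duality between $\sK$ and $\widehat{\sK}$, and it is where the true combinatorial content of the theorem lies; the convex-geometric step of producing the polytope, given the correct fan, is then essentially a routine application of Postnikov's deformation-cone description of generalized permutohedra.
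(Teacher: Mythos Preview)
The paper does not prove this theorem: it is quoted verbatim from~\cite{Zivaljevic21} (Corollary~19 there) and used as a black box. There is therefore no in-paper proof to compare your attempt against.

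That said, your sketch has the right architecture and matches the spirit of the Jevti\'c--\v{Z}ivaljevi\'c argument: one assigns to each vertex $x_i$ the ray through $e_i$ and to each $y_j$ the ray through $-e_j$ (equivalently $e_{[m]\setminus\{j\}}$ in the hyperplane $\sum x_k=\mathrm{const}$), checks via the deleted-join description that the resulting collection of cones is a complete simplicial fan coarsening the braid fan, and then observes that a polytope with such a normal fan is by definition a generalized permutohedron.

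Where your write-up has a genuine gap is the sentence ``starting from any convex realization of $\B(\sK)^*$, one deforms its facet normals into the prescribed $\pm e_i$ directions within the deformation cone of $P_m$, preserving the combinatorial type along the way.'' This is circular: the deformation cone of $P_m$ consists precisely of polytopes whose normal fans already coarsen the braid fan, so an arbitrary convex realization of $\B(\sK)^*$ need not lie in it, and there is no a~priori path inside it to deform along. What must actually be shown is that the \emph{specific} canonical fan with generators $\pm e_i$ is itself polytopal whenever the abstract sphere $\B(\sK)$ is polytopal; that is, abstract polytopality of the sphere forces polytopality of this particular fan. In~\cite{Zivaljevic19,Zivaljevic21} this goes through the canonical starshaped realization of $\B(\sK)$ and an explicit analysis of when the associated support function can be made strictly convex. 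Your submodularity remark points in the right direction, but the implication ``$\B(\sK)$ polytopal $\Rightarrow$ canonical fan polytopal'' is the substantive step and is not supplied by anything you wrote.
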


Now, we are going to show that all the 3-dimensional Bier polytopes are  nestohedra, except for one, which is merely a generalized permutohedron.

\begin{theorem}\label{NotDelzantTheorem1}
$\EuScript{P}_{6}$ is not a nestohedron.    
\end{theorem}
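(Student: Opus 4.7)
The plan is to argue by contradiction using the Buchstaber--Volodin classification of flag nestohedra as $2$-truncated cubes recalled in Section~1. The two main tasks will be to verify that $\EuScript{P}_6$ is flag and that it is not a $2$-truncated cube.

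First I would check flagness of $\EuScript{P}_6$, equivalently of its nerve $\EuScript{S}_6$. As recorded in the previous subsection, $\EuScript{P}_6$ is $vc^{2}(\Delta^1\times\Delta^1\times\Delta^1)$ with two opposite vertices of the cube cut off. Dually, $\EuScript{S}_6$ is obtained from the octahedron (the nerve of $I^3$) by performing two stellar subdivisions at a pair of antipodal triangular facets. The octahedron is flag, and stellar subdivision of a triangle in a flag simplicial $2$-sphere preserves flagness: the newly inserted vertex is joined only to the three vertices of the subdivided triangle, so every new minimal non-face involving it has cardinality $2$, and the old minimal non-faces are unchanged. Hence $\EuScript{S}_6$ is flag.

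If $\EuScript{P}_6$ were a nestohedron, then by \cite{Buchstaber-Volodin} its flagness would force it to be combinatorially a $2$-truncated cube. To rule this out I would exhibit the two triangular facets of $\EuScript{P}_6$ and show that $2$-truncated cubes cannot have any triangular facet. The two triangles of $\EuScript{P}_6$ are dual to the two vertices of $\EuScript{S}_6$ added by the stellar subdivisions above, each of which has a $3$-cycle as its link in $\EuScript{S}_6$. On the other hand, truncation of an edge $e = F_1 \cap F_2$ of a simple $3$-polytope $P$ affects $2$-faces as follows: the newly created facet is combinatorially $\Delta^1\times\Delta^1$; the facets $F_1$ and $F_2$ retain their vertex counts, with $e$ replaced by a parallel edge; and each of the two facets meeting $e$ only at an endpoint of $e$ gains exactly one vertex. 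Starting from $I^3$, whose facets are all quadrilaterals, a straightforward induction on the number of edge truncations shows that every $2$-face of any $2$-truncated cube has at least four vertices, contradicting the presence of triangular facets in $\EuScript{P}_6$.

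The main obstacle is the vertex-count bookkeeping in the last step: one has to make precise the effect of an edge truncation on each type of incident facet and verify that no truncation can decrease the number of vertices of an existing $2$-face below its pre-truncation count. The flagness verification and the appeal to the Buchstaber--Volodin theorem are essentially immediate from tools already assembled in the earlier subsections.
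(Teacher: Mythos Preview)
Your approach has a genuine gap at the very first step: $\EuScript{P}_{6}$ is \emph{not} flag, so the Buchstaber--Volodin theorem on flag nestohedra does not apply. The error is in the claim that stellar subdivision of a triangular facet preserves flagness. When you stellarly subdivide the facet $\{a,b,c\}$ by inserting a new vertex $v$, the three edges $\{a,b\}$, $\{a,c\}$, $\{b,c\}$ all survive (each lies in one of the new triangles $\{v,a,b\}$, $\{v,a,c\}$, $\{v,b,c\}$), while $\{a,b,c\}$ itself is no longer a face. Thus $\{a,b,c\}$ becomes a minimal non-face of cardinality $3$, destroying flagness. Your argument checked only minimal non-faces containing the new vertex $v$ and overlooked this one among the old vertices. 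Dually: cutting a vertex of a simple $3$-polytope leaves the three incident facets pairwise adjacent (along edges of the new triangle) but with empty common intersection, which is exactly a size-$3$ minimal non-face in the nerve.

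This is consistent with the data already in the paper: Theorem~\ref{MainClassTheo} lists the flag Bier polytopes as $\EuScript{P}_{4},\EuScript{P}_{5},\EuScript{P}_{8},\EuScript{P}_{10}$, and the $\MF$ table in Appendix~A shows $x_{1}x_{2}x_{3}$ and $y_{1}y_{2}y_{3}$ among the minimal non-faces of $\EuScript{S}_{6}$. So the contradiction you aim for never arises, and a different argument is needed. The paper's proof instead exploits that $\EuScript{P}_{6}$ has no quadrangular facet and performs a direct case analysis of the face-truncation sequences from $\Delta^{3}$ allowed by Lemma~\ref{lemma}, showing none can terminate at $\EuScript{P}_{6}$.
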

\begin{proof}
Suppose that $\EuScript{P}_{6}$ has a combinatorial type of nestohedron with a building set $\mathcal{B}$. Let $$\mathcal{B}_{\Delta^{3}} = \Big{\{}\{1\}, \{2\}, \{3\}, \{4\}, \{1,2,3,4\} \Big{\}}$$ be a building set of $\Delta^{3}$. It is well-known that every $n$-dimensional nestohedron can be obtained from the $n$-simplex{\footnote{In the flag case every $n$-dimensional nestohedron is an $n$-dimensional 2-truncated cube, see~\cite[Theorem 6.6]{Buchstaber-Volodin}}} by a sequence of face truncations, see~\cite[Theorem 1.5.18]{Buchstaber-Panov}. In other words, $\mathcal{B}_{\Delta^{3}}\subset \mathcal{B}$.

Note that cutting off the vertex of a simple $3$-polytope adds $1$ triangular face, $3$ edges and $2$ vertices, and slicing an edge adds $1$ quadrangular face, $3$ edges, and $2$ vertices. The $f$-vector of $\EuScript{P}_{6}$ is $(12,18,8)$ and the $f$-vector of $\Delta^3$ is $(4,6,4)$. Thus, it is necessary to do exactly $\frac{12-4}{2}=4$ cuts to get $\EuScript{P}_{6}$ from $\Delta^3$. Observe that $\EuScript{P}_{6}$ does not have a quadrangular face. So the last step should be the vertex truncation.

\emph{Case 1}. $\mathcal{B}$ contains a building set $\mathcal{B}_{I^{3}}$ of the $3$-cube $I^{3}$. Without loss of generality, let $$\mathcal{B}_{I^{3}}=\Big{\{}\{1\}, \{2\}, \{3\}, \{4\}, \{1,2\}, \{1,2,3\}, \{1,2,3,4\} \Big{\}}.$$
\hypertarget{Figure 1}{
\begin{figure}[ht]
\begin{center}
\usetikzlibrary{patterns}

\tikzset{
pattern size/.store in=\mcSize, 
pattern size = 5pt,
pattern thickness/.store in=\mcThickness, 
pattern thickness = 0.3pt,
pattern radius/.store in=\mcRadius, 
pattern radius = 1pt}
\makeatletter
\pgfutil@ifundefined{pgf@pattern@name@_4410961wf}{
\pgfdeclarepatternformonly[\mcThickness,\mcSize]{_4410961wf}
{\pgfqpoint{0pt}{0pt}}
{\pgfpoint{\mcSize+\mcThickness}{\mcSize+\mcThickness}}
{\pgfpoint{\mcSize}{\mcSize}}
{
\pgfsetcolor{\tikz@pattern@color}
\pgfsetlinewidth{\mcThickness}
\pgfpathmoveto{\pgfqpoint{0pt}{0pt}}
\pgfpathlineto{\pgfpoint{\mcSize+\mcThickness}{\mcSize+\mcThickness}}
\pgfusepath{stroke}
}}
\makeatother

 
\tikzset{
pattern size/.store in=\mcSize, 
pattern size = 5pt,
pattern thickness/.store in=\mcThickness, 
pattern thickness = 0.3pt,
pattern radius/.store in=\mcRadius, 
pattern radius = 1pt}
\makeatletter
\pgfutil@ifundefined{pgf@pattern@name@_ap7d8q7tq}{
\pgfdeclarepatternformonly[\mcThickness,\mcSize]{_ap7d8q7tq}
{\pgfqpoint{0pt}{0pt}}
{\pgfpoint{\mcSize+\mcThickness}{\mcSize+\mcThickness}}
{\pgfpoint{\mcSize}{\mcSize}}
{
\pgfsetcolor{\tikz@pattern@color}
\pgfsetlinewidth{\mcThickness}
\pgfpathmoveto{\pgfqpoint{0pt}{0pt}}
\pgfpathlineto{\pgfpoint{\mcSize+\mcThickness}{\mcSize+\mcThickness}}
\pgfusepath{stroke}
}}
\makeatother

 
\tikzset{
pattern size/.store in=\mcSize, 
pattern size = 5pt,
pattern thickness/.store in=\mcThickness, 
pattern thickness = 0.3pt,
pattern radius/.store in=\mcRadius, 
pattern radius = 1pt}
\makeatletter
\pgfutil@ifundefined{pgf@pattern@name@_m4ws0y35t}{
\pgfdeclarepatternformonly[\mcThickness,\mcSize]{_m4ws0y35t}
{\pgfqpoint{0pt}{-\mcThickness}}
{\pgfpoint{\mcSize}{\mcSize}}
{\pgfpoint{\mcSize}{\mcSize}}
{
\pgfsetcolor{\tikz@pattern@color}
\pgfsetlinewidth{\mcThickness}
\pgfpathmoveto{\pgfqpoint{0pt}{\mcSize}}
\pgfpathlineto{\pgfpoint{\mcSize+\mcThickness}{-\mcThickness}}
\pgfusepath{stroke}
}}
\makeatother

 
\tikzset{
pattern size/.store in=\mcSize, 
pattern size = 5pt,
pattern thickness/.store in=\mcThickness, 
pattern thickness = 0.3pt,
pattern radius/.store in=\mcRadius, 
pattern radius = 1pt}
\makeatletter
\pgfutil@ifundefined{pgf@pattern@name@_rvu75omry}{
\pgfdeclarepatternformonly[\mcThickness,\mcSize]{_rvu75omry}
{\pgfqpoint{0pt}{-\mcThickness}}
{\pgfpoint{\mcSize}{\mcSize}}
{\pgfpoint{\mcSize}{\mcSize}}
{
\pgfsetcolor{\tikz@pattern@color}
\pgfsetlinewidth{\mcThickness}
\pgfpathmoveto{\pgfqpoint{0pt}{\mcSize}}
\pgfpathlineto{\pgfpoint{\mcSize+\mcThickness}{-\mcThickness}}
\pgfusepath{stroke}
}}
\makeatother

 
\tikzset{
pattern size/.store in=\mcSize, 
pattern size = 5pt,
pattern thickness/.store in=\mcThickness, 
pattern thickness = 0.3pt,
pattern radius/.store in=\mcRadius, 
pattern radius = 1pt}
\makeatletter
\pgfutil@ifundefined{pgf@pattern@name@_n8caparkv}{
\pgfdeclarepatternformonly[\mcThickness,\mcSize]{_n8caparkv}
{\pgfqpoint{0pt}{0pt}}
{\pgfpoint{\mcSize+\mcThickness}{\mcSize+\mcThickness}}
{\pgfpoint{\mcSize}{\mcSize}}
{
\pgfsetcolor{\tikz@pattern@color}
\pgfsetlinewidth{\mcThickness}
\pgfpathmoveto{\pgfqpoint{0pt}{0pt}}
\pgfpathlineto{\pgfpoint{\mcSize+\mcThickness}{\mcSize+\mcThickness}}
\pgfusepath{stroke}
}}
\makeatother
\tikzset{every picture/.style={line width=0.75pt}} 

\begin{tikzpicture}[x=0.75pt,y=0.75pt,yscale=-1,xscale=1]

\draw  [draw opacity=0][pattern=_4410961wf,pattern size=3.75pt,pattern thickness=0.75pt,pattern radius=0pt, pattern color={rgb, 255:red, 0; green, 0; blue, 0}] (192.18,65.72) -- (141.81,79.17) -- (119.68,64.76) -- cycle ;
\draw   (228.33,117.11) -- (127.58,144) -- (83.33,115.19) -- (156.03,14.33) -- cycle ;
\draw    (156.03,14.33) -- (127.58,144) ;
\draw  [dash pattern={on 0.84pt off 2.51pt}]  (83.33,115.19) -- (228.33,117.11) ;
\draw    (119.68,64.76) -- (141.81,79.17) ;
\draw    (141.81,79.17) -- (192.18,65.72) ;
\draw  [dash pattern={on 0.84pt off 2.51pt}]  (119.68,64.76) -- (192.18,65.72) ;

\draw   (403.33,38) -- (319.33,82) -- (262,37) -- cycle ;
\draw    (319.33,82) -- (319.33,139) ;
\draw    (403.33,38) -- (403.33,95) ;
\draw    (262,37) -- (262,94) ;
\draw    (262,94) -- (319.33,139) -- (403.33,95) ;
\draw  [dash pattern={on 0.84pt off 2.51pt}]  (262,94) -- (403.33,95) ;
\draw  [draw opacity=0][pattern=_ap7d8q7tq,pattern size=3.75pt,pattern thickness=0.75pt,pattern radius=0pt, pattern color={rgb, 255:red, 0; green, 0; blue, 0}] (403.33,38) -- (319.33,82) -- (262,37) -- cycle ;
\draw  [dash pattern={on 0.84pt off 2.51pt}]  (290.67,116.5) -- (361.33,117) ;
\draw    (290.67,59.5) -- (290.67,116.5) ;
\draw    (361.33,60) -- (361.33,117) ;
\draw    (290.67,59.5) -- (361.33,60) ;
\draw  [draw opacity=0][pattern=_m4ws0y35t,pattern size=3.75pt,pattern thickness=0.75pt,pattern radius=0pt, pattern color={rgb, 255:red, 0; green, 0; blue, 0}] (361.33,60) -- (361.33,117) -- (290.67,116.5) -- (290.67,59.5) -- cycle ;

\draw   (447,54.3) -- (483.3,18) -- (570.33,18) -- (570.33,102.7) -- (534.03,139) -- (447,139) -- cycle ; \draw   (570.33,18) -- (534.03,54.3) -- (447,54.3) ; \draw   (534.03,54.3) -- (534.03,139) ;
\draw  [dash pattern={on 0.84pt off 2.51pt}]  (447,139) -- (484.33,103) -- (570.33,102.7) ;
\draw  [dash pattern={on 0.84pt off 2.51pt}]  (483.3,18) -- (484.33,103) ;
\draw  [draw opacity=0][pattern=_rvu75omry,pattern size=3.75pt,pattern thickness=0.75pt,pattern radius=0pt, pattern color={rgb, 255:red, 0; green, 0; blue, 0}] (534.03,54.3) -- (534.03,139) -- (447,139) -- (447,54.3) -- cycle ;
\draw  [draw opacity=0][pattern=_n8caparkv,pattern size=3.75pt,pattern thickness=0.75pt,pattern radius=0pt, pattern color={rgb, 255:red, 0; green, 0; blue, 0}] (570.33,18) -- (534.03,54.3) -- (447,54.3) -- (483.3,18) -- cycle ;

\end{tikzpicture}
\end{center}
\caption{$\mathcal{B}_{I^3}=\Big{\{}\{1\}, \{2\}, \{3\}, \{4\}, \{1,2\}, \{1,2,3\}, \{1,2,3,4\} \Big{\}}$.}
\end{figure}}
It follows from Lemma \ref{lemma} that $\EuScript{P}_{6}$ can be obtained from $I^{3}$ by two truncations corresponding to some decompositions. The last step is to truncate some vertex. This means that $\mathcal{B}\backslash \mathcal{B}_{I^{3}} = \big{\{}S^{1},S^{2}\big{\}}$ and $S^{2}=S_{1}^{2}\sqcup S^{2}_{2}\sqcup S^{2}_{3}$ has three disjoint elements of $\mathcal{B}_{I^{3}}$. Due to the minimality of decompositions, $\big{\{}S^{2}_{1}, S^{2}_{2}, S^{2}_{3}\big{\}}$ is $$\text{either}\quad \Big{\{} \{1\}, \{3\}, \{4\} \Big{\}} \quad \text{or} \quad \Big{\{} \{2\}, \{3\}, \{4\} \Big{\}}.$$
If the decomposition of $S^{1}$ consists of only two elements, then there will be only one triangular face in $\EuScript{P}_{6}$. Therefore $\big{\{}S^{1}, S^{2} \big{\}} = \Big{\{}\{1,3,4\}, \{2,3,4\} \Big{\}}$, and we arrive to $\EuScript{P}_{2}$, which is not combinatorially equivalent to $\EuScript{P}_{6}$ by the classification of the Bier polytopes.

\emph{Case 2}. $\mathcal{B}$ does not contain a building set of the $3$-cube $I^{3}$. Again, by Lemma \ref{lemma}, $\mathcal{B}\backslash \mathcal{B}_{\Delta^{3}} = \big{\{}S^{1}, S^{2}, S^{3}, S^{4} \big{\}}$ and $S^{4}$ corresponds to vertex truncation. Hence $S^{4}$ is decomposed as $S^{4}_{1}\sqcup S^{4}_{2} \sqcup S^{4}_{3}$, where $S^{4}_{i} \in \mathcal{B}_{\Delta^{3}}$, $i=1,2,3$. Observe that all $S^{4}_{i}$ are singletons. Therefore, if we add $S^{4}$ to $\mathcal{B}_{\Delta^{3}}$, we obtain a building set corresponding to the triangular prism $\Delta^{2}\times [0,1]$. Without loss of generality, let 
$$\mathcal{B}_{\Delta^{2}\times [0,1]} = \Big{\{}\{1\}, \{2\}, \{3\}, \{4\}, \{1,2,3\}, \{1,2,3,4\} \Big{\}}.$$
Since $\EuScript{P}_{6}$ has two triangular faces, we have to perform a vertex truncation once more. We arrive to $\EuScript{P}_{11}$. Due to symmetry, without loss of generality, assume that $$\mathcal{B}_{\EuScript{P}_{11}} = \Big{\{}\{1\}, \{2\}, \{3\}, \{4\}, \{1,2,3\}, \{2,3,4\}, \{1,2,3,4\} \Big{\}}.$$ If we cut off a vertex again, a hexagonal face will be formed (\hyperlink{Figure 2}{Figure 2}). By assumption, we can't add $\{1,2\}, \{1,3\}, \{1,4\}, \{2,3\}, \{2,4\}, \{3,4\}$ to $\mathcal{B}_{\EuScript{P}_{11}}$, a contradiction. Q.E.D.
\hypertarget{Figure 2}{
\begin{figure}[ht]
\begin{center}

\tikzset{
pattern size/.store in=\mcSize, 
pattern size = 5pt,
pattern thickness/.store in=\mcThickness, 
pattern thickness = 0.3pt,
pattern radius/.store in=\mcRadius, 
pattern radius = 1pt}
\usetikzlibrary{patterns}
\makeatletter
\pgfutil@ifundefined{pgf@pattern@name@_8j4jo5n5u}{
\pgfdeclarepatternformonly[\mcThickness,\mcSize]{_8j4jo5n5u}
{\pgfqpoint{0pt}{0pt}}
{\pgfpoint{\mcSize+\mcThickness}{\mcSize+\mcThickness}}
{\pgfpoint{\mcSize}{\mcSize}}
{
\pgfsetcolor{\tikz@pattern@color}
\pgfsetlinewidth{\mcThickness}
\pgfpathmoveto{\pgfqpoint{0pt}{0pt}}
\pgfpathlineto{\pgfpoint{\mcSize+\mcThickness}{\mcSize+\mcThickness}}
\pgfusepath{stroke}
}}
\makeatother

 \usetikzlibrary{patterns}
\tikzset{
pattern size/.store in=\mcSize, 
pattern size = 5pt,
pattern thickness/.store in=\mcThickness, 
pattern thickness = 0.3pt,
pattern radius/.store in=\mcRadius, 
pattern radius = 1pt}\makeatletter
\pgfutil@ifundefined{pgf@pattern@name@_sp96cluye}{
\pgfdeclarepatternformonly[\mcThickness,\mcSize]{_sp96cluye}
{\pgfqpoint{-\mcThickness}{-\mcThickness}}
{\pgfpoint{\mcSize}{\mcSize}}
{\pgfpoint{\mcSize}{\mcSize}}
{\pgfsetcolor{\tikz@pattern@color}
\pgfsetlinewidth{\mcThickness}
\pgfpathmoveto{\pgfpointorigin}
\pgfpathlineto{\pgfpoint{\mcSize}{0}}
\pgfpathmoveto{\pgfpointorigin}
\pgfpathlineto{\pgfpoint{0}{\mcSize}}
\pgfusepath{stroke}}}
\makeatother
\usetikzlibrary{patterns}
\usetikzlibrary{patterns}
\tikzset{
pattern size/.store in=\mcSize, 
pattern size = 5pt,
pattern thickness/.store in=\mcThickness, 
pattern thickness = 0.3pt,
pattern radius/.store in=\mcRadius, 
pattern radius = 1pt}
\makeatletter
\pgfutil@ifundefined{pgf@pattern@name@_g9vhhpj4j}{
\makeatletter
\pgfdeclarepatternformonly[\mcRadius,\mcThickness,\mcSize]{_g9vhhpj4j}
{\pgfpoint{-0.5*\mcSize}{-0.5*\mcSize}}
{\pgfpoint{0.5*\mcSize}{0.5*\mcSize}}
{\pgfpoint{\mcSize}{\mcSize}}
{
\pgfsetcolor{\tikz@pattern@color}
\pgfsetlinewidth{\mcThickness}
\pgfpathcircle\pgfpointorigin{\mcRadius}
\pgfusepath{stroke}
}}
\makeatother
\tikzset{every picture/.style={line width=0.75pt}} 

\begin{tikzpicture}[x=0.75pt,y=0.75pt,yscale=-1,xscale=1]
\usetikzlibrary{patterns}
\usetikzlibrary{patterns}
\draw  [draw opacity=0][pattern=_8j4jo5n5u,pattern size=3.75pt,pattern thickness=0.75pt,pattern radius=0pt, pattern color={rgb, 255:red, 0; green, 0; blue, 0}] (146.83,64.5) -- (104.33,78.5) -- (64.83,60.5) -- cycle ;
\draw   (177.33,118) -- (92.33,146) -- (13.33,110) -- (116.33,11) -- cycle ;
\draw    (116.33,11) -- (92.33,146) ;
\draw  [dash pattern={on 0.84pt off 2.51pt}]  (13.33,110) -- (177.33,118) ;
\draw    (64.83,60.5) -- (104.33,78.5) ;
\draw    (104.33,78.5) -- (146.83,64.5) ;
\draw  [dash pattern={on 0.84pt off 2.51pt}]  (64.83,60.5) -- (146.83,64.5) ;
\draw    (61.33,132.33) -- (101.33,95.33) -- (122.33,135.33) ;
\draw  [dash pattern={on 0.84pt off 2.51pt}]  (61.33,132.33) -- (122.33,135.33) ;
\draw  [dash pattern={on 0.84pt off 2.51pt}]  (45.33,125) -- (76.33,111.67) -- (48.33,76.67) ;
\draw    (48.33,76.67) -- (45.33,125) ;
\draw  [draw opacity=0][pattern=_sp96cluye,pattern size=4.275pt,pattern thickness=0.75pt,pattern radius=0pt, pattern color={rgb, 255:red, 0; green, 0; blue, 0}] (76.33,111.67) -- (45.33,125) -- (48.33,76.67) -- cycle ;
\draw  [draw opacity=0][pattern=_g9vhhpj4j,pattern size=4.125pt,pattern thickness=0.75pt,pattern radius=0.75pt, pattern color={rgb, 255:red, 0; green, 0; blue, 0}] (122.33,135.33) -- (101.33,95.33) -- (61.33,132.33) -- cycle ;
\draw  [line width=1.5]  (48.33,76.67) -- (64.83,60.5) -- (104.33,78.5) -- (101.33,95.33) -- (61.33,132.33) -- (45.33,125) -- cycle ;

\end{tikzpicture}
\end{center}
\caption{Hexagonal face after $3$ vertex truncations.}
\end{figure}}
\end{proof}

However, $\EuScript{P}_{6}$ has a realization with a regular normal fan, as the following result shows.

\begin{proposition}\label{DelzantP6}
$\EuScript{P}_{6}$ admits a Delzant realization in $\mathbb{R}^{3}$.
\end{proposition}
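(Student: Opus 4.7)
The plan is to exhibit an explicit Delzant realization of $\EuScript{P}_{6}$ by cutting two opposite corners of the standard unit cube with hyperplanes whose normals are $\pm(e_1+e_2+e_3)$. Since the unit cube itself is Delzant and cutting a smooth vertex of a Delzant polytope by a hyperplane whose inward normal equals the sum of the inward facet normals at that vertex yields a new Delzant polytope (this is the polytopal picture of a toric blow-up), the construction is essentially forced once one decides to realize $\EuScript{P}_{6}$ as a double vertex-cut of the cube.

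Concretely, I would fix any $\epsilon \in (0,1)$ and set
\[
Q := \Bigl\{ x \in \mathbb{R}^{3} : 0 \le x_i \le 1 \text{ for } i = 1,2,3,\; \epsilon \le x_1+x_2+x_3 \le 3 - \epsilon \Bigr\}.
\]
The first step is to identify $Q$ combinatorially with $\EuScript{P}_{6}$. For $\epsilon < 1$, the only cube vertex $v \in \{0,1\}^{3}$ satisfying $v_1 + v_2 + v_3 < \epsilon$ is $(0,0,0)$, so the lower cut removes only that corner and creates a triangular facet with vertices $(\epsilon,0,0)$, $(0,\epsilon,0)$, $(0,0,\epsilon)$, turning the three adjacent square facets into pentagons; symmetrically, the upper cut removes only $(1,1,1)$. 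The two cut hyperplanes are parallel, so the truncations do not interact. Hence $Q$ is a simple $3$-polytope with two triangular and six pentagonal facets and the same $f$-vector $(12,18,8)$ as $\EuScript{P}_{6}$; the vertex-facet incidences then agree with those of $vc^{2}(\Delta^{1}\times\Delta^{1}\times\Delta^{1})$ with opposite cuts.

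The second step is the Delzant check: for each vertex of $Q$, the primitive outward normals of the three incident facets must form a $\mathbb{Z}$-basis of $\mathbb{Z}^{3}$. The eight primitive normals in play are $\pm e_1, \pm e_2, \pm e_3$ for the six pentagonal facets and $\pm(e_1+e_2+e_3)$ for the two triangular facets. At each of the six surviving cube vertices, the three incident normals lie in $\{\pm e_1,\pm e_2,\pm e_3\}$ and span $\mathbb{Z}^{3}$. At a new vertex near $(0,0,0)$, for instance $(\epsilon,0,0)$, the incident facets have outward normals $-e_2$, $-e_3$, $-(e_1+e_2+e_3)$, whose determinant is $\pm 1$; the other two new vertices near $(0,0,0)$ are obtained by cyclic permutation, and the three near $(1,1,1)$ follow by the antipodal symmetry $x \mapsto (1,1,1)-x$.

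I do not expect a substantive obstacle: the combinatorial identification is immediate for small $\epsilon$, and the Delzant condition reduces, by the symmetries of the construction, to a single $3\times 3$ determinant. The only subtlety is to fix the normal of the cutting hyperplanes as $\pm(e_1+e_2+e_3)$ rather than some other primitive lattice vector; any other choice would fail the smoothness test at the newly created vertices of $Q$.
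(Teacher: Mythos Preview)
Your proof is correct. The combinatorial identification of $Q$ with $\EuScript{P}_{6}=vc^{2}(I^{3})$ (opposite vertices cut) is immediate for $\epsilon\in(0,1)$, and the Delzant verification is exactly as you describe: at the six surviving cube vertices the normals are a signed permutation basis, and at each new vertex the relevant $3\times 3$ minor has determinant $\pm 1$.

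Your route differs from the paper's. The paper does not start from the standard cube; instead it observes that $\EuScript{P}_{6}$ is a single vertex truncation of $\EuScript{P}_{7}$, takes the \emph{nestohedron} Delzant realization of $\EuScript{P}_{7}$ (whose normal fan is not the standard cube fan), and cuts the required vertex by a hyperplane parallel to the already existing triangular facet. Your construction is more direct and more symmetric: you exploit the antipodal symmetry $x\mapsto(1,1,1)-x$ of the problem, which collapses the Delzant check to a single determinant. In fact the characteristic matrix your polytope produces, with columns $\pm e_{1},\pm e_{2},\pm e_{3},\pm(e_{1}+e_{2}+e_{3})$, is precisely the matrix $\Lambda_{\EuScript{P}_{6}}$ that the paper records in Appendix~B, so your argument also supplies a cleaner justification for that matrix than the proof text itself.
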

\begin{proof}
Indeed, $\EuScript{P}_{6}$ is formed by truncating a vertex of the nestohedron $\EuScript{P}_{7}$. Thus, as the desired Delzant realization, we can take the Delzant realization of $\EuScript{P}_{7}$ and perform the required vertex cut by means of a plane in general position parallel to the single triangular face of $\EuScript{P}_{7}$. Q.E.D.
\end{proof}
Here is the main result of our classification of two-dimensional Bier spheres.

\begin{theorem}\label{MainClassTheo}\hypertarget{theorem 4}
There are exactly $13$ pairwisely non-isomorphic two-dimensional Bier spheres $\EuScript{S}_{i}$. The corresponding Bier polytopes $\EuScript{P}_{i}$ have the following properties:
\begin{enumerate}
\item Four of them are flag ($\EuScript{P}_{4},\EuScript{P}_{5},\EuScript{P}_{8},\EuScript{P}_{10}$) and the rest are not;
\item For each $i=1,\ldots,5,\not${$6$}$,7,\ldots,13$ the polytope $\EuScript{P}_{i}$ is a nestohedron;
\item Each of them admits a Delzant realization in $\mathbb{R}^{3}$, see \hyperlink{Table 2}{Table 2}.
\end{enumerate}
\end{theorem}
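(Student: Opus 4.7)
The plan is to assemble Theorem~\ref{MainClassTheo} by collecting and organizing the work already carried out in Sections 2.2--2.4. I will proceed in three stages matching the three claims in the statement.

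For the claim that there are exactly $13$ pairwise non-isomorphic two-dimensional Bier spheres, I would first recall from the preceding subsection that, up to combinatorial equivalence and Alexander duality (invoking $\B(\sK)=\B(\widehat{\sK})$), there are only $28$ simplicial complexes on $[4]$ to check, tabulated in Table 1. The work in the subsection $\EuScript{S}_4\cong\EuScript{S}_6$, $\EuScript{S}_7\cong\EuScript{S}_{11}$ via $\sK_8\sim\sK_{11}$, $\EuScript{S}_{10}\cong\EuScript{S}_{13}$ via $\sK_9\sim\sK_{13}$, isomorphisms exhibited by explicit vertex relabellings reduces the $16$ remaining candidates to at most $13$. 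Pairwise non-isomorphism of the representatives $\EuScript{S}_1,\ldots,\EuScript{S}_{13}$ then follows from the tabulated $f$- and $h$-vectors and minimal non-face counts in Appendix A, together with the three supplementary observations: $\EuScript{P}_4$ has a pentagonal facet but $\EuScript{P}_5$ does not; $\EuScript{S}_1,\EuScript{S}_2,\EuScript{S}_3$ (resp.\ $\EuScript{S}_7,\EuScript{S}_9$) are separated by $|\MF|$; and $\EuScript{P}_2$ has a hexagonal facet while $\EuScript{P}_6$ does not. The flag/non-flag dichotomy is read directly from $\MF(\B(\sK))$ via Proposition (MF): $\B(\sK)$ is flag iff every minimal non-face has cardinality $\le 2$, which (inspecting the appendix) selects exactly $\EuScript{P}_4,\EuScript{P}_5,\EuScript{P}_8,\EuScript{P}_{10}$.

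For the second claim I would exhibit a connected building set $\mathcal{B}_i\subseteq 2^{[4]}$ (in some cases $2^{[n+1]}$ for small $n$) whose nestohedron is combinatorially equivalent to $\EuScript{P}_i$ for each $i\in\{1,2,3,5,7,8,9,10,11,12,13\}$. This is reduced to the case-by-case descriptions already given: $\EuScript{P}_{13}=\Delta^3$, $\EuScript{P}_{12}=vc^1(\Delta^3)$, $\EuScript{P}_1=vc^4(\Delta^3)$ and $\EuScript{P}_9=vc^3(\Delta^3)$ are realized as stellar subdivisions of the standard simplex building set; $\EuScript{P}_{10}=I^3$, $\EuScript{P}_7=vc^1(I^3)$, $\EuScript{P}_2,\EuScript{P}_6$ as specific two-vertex cuts of $I^3$, and $\EuScript{P}_5,\EuScript{P}_8$ as two-edge-truncations of $I^3$, all of which correspond to standard enlargements of $\mathcal{B}_{I^3}=\{\{1\},\{2\},\{3\},\{4\},\{1,2\},\{1,2,3\},\{1,2,3,4\}\}$ by the appropriate proper subsets of $[4]$ (using Lemma~\ref{lemma} to interpret each truncation as the addition of a building-set element); and $\EuScript{P}_3,\EuScript{P}_{11}$ are obtained from $\Delta^3$ by an edge followed by a vertex cut, again realized as nestohedra. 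The exceptions $\EuScript{P}_4$ and $\EuScript{P}_6$ are excluded by Theorems~\ref{NotDelzantTheorem2} and~\ref{NotDelzantTheorem1}, respectively.

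For the third claim, every nestohedron is a Delzant polytope by Lemma~\ref{lemma} and the discussion following it, so eleven of the thirteen Bier polytopes inherit a canonical Delzant realization from their nestohedron structure. For the remaining two cases, $\EuScript{P}_4$ and $\EuScript{P}_6$, Delzant realizations have already been constructed in Propositions~\ref{DelzantP4} and~\ref{DelzantP6}: the former by explicit integer inequalities with normal-vector matrix shown to generate a complete regular fan, the latter by cutting the triangular facet of the Delzant realization of $\EuScript{P}_7$ by a parallel hyperplane in general position, which preserves regularity of the fan. The concrete data is recorded in Appendix B.

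The main obstacle is really the bookkeeping in part (2): one needs to be sure that the proposed building sets are connected, that the resulting Minkowski sums actually realize the claimed combinatorial types (which uses the Feichtner--Sturmfels nested set description), and that no two of the thirteen polytopes are accidentally the same. Once the classification table and the $f$-vector/hexagon/pentagon facet comparisons are in place, the rest is a direct appeal to the theorems established earlier in the section.
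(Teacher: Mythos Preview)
Your proposal follows essentially the same route as the paper: the classification into thirteen types, the flag determination from the $\MF$ table, explicit building sets for eleven of the polytopes (via Lemma~\ref{lemma}), and the appeal to Theorems~\ref{NotDelzantTheorem1}, \ref{NotDelzantTheorem2} and Propositions~\ref{DelzantP4}, \ref{DelzantP6} for the two exceptions. The paper's own proof is in fact even more compressed than yours---it simply lists the eleven building sets $\mathcal{B}_i$ explicitly and cites the earlier results---so your outline is correct in spirit.

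There is, however, an internal inconsistency in your part (2). You include $\EuScript{P}_6$ in the list of polytopes to be realized as ``standard enlargements of $\mathcal{B}_{I^3}$'', but this is exactly what Theorem~\ref{NotDelzantTheorem1} rules out: $\EuScript{P}_6$ \emph{is} combinatorially $vc^2(I^3)$ (opposite vertices cut), yet the whole content of that theorem is that no building set on $[4]$ produces it. You do exclude $\EuScript{P}_6$ correctly in your final sentence, so this is an editing slip rather than a conceptual gap, but you should remove $\EuScript{P}_6$ from that list. A minor related point: your index notation ``$\EuScript{S}_4\cong\EuScript{S}_6$'' etc.\ conflates the labels $\EuScript{S}_i$ (of which there are only thirteen) with the underlying complexes $\sK_j$ (sixteen up to duality); the isomorphisms you mean are $\mathrm{Bier}_4(\sK_4)\cong\mathrm{Bier}_4(\sK_6)$ and so on.
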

\begin{proof}
Indeed, it follows from the Lemma~\ref{lemma} that the following building sets $\mathcal{B}_{i}$ define the nestohedra realizations of the simple polytopes $\EuScript{P}_{i}$:
\footnotesize
\begin{itemize}
    \item $\mathcal{B}_{1}=\Big{\{}\{1\}, \{2\}, \{3\}, \{4\}, \{1,2,3,4\}, \{1,3,4\}, \{1,2,4\}, \{1,2,3\}, \{2,3,4\} \Big{\}}$;
    \item $\mathcal{B}_{2}=\Big{\{}\{1\}, \{2\}, \{3\}, \{4\}, \{1,2\}, \{1,2,3\}, \{1,2,3,4\}, \{1,3,4\}, \{2,3,4\} \Big{\}}$;
    \item $\mathcal{B}_{3}=\Big{\{}\{1\}, \{2\}, \{3\}, \{4\}, \{1,2\}, \{1,2,3\}, \{1,2,3,4\}, \{2,3\}, \{1,3,4\}\Big{\}}$;
    \item $\mathcal{B}_{4} = \Big{\{}\{1\}, \{2\}, \{3\}, \{4\}, \{1,2\}, \{1,2,3\}, \{1,2,3,4\}, \{3,4\}, \{1,3,4\} \Big{\}}$;
    \item $\mathcal{B}_{5}=\Big{\{}\{1\}, \{2\}, \{3\}, \{4\},\{1,2\}, \{1,2,3\}, \{1,2,3,4\}, \{3,4\}, \{1,2,4\}\Big{\}}$;
    \item $\mathcal{B}_{7}=\Big{\{}\{1\}, \{2\}, \{3\}, \{4\}, \{1,2\}, \{1,2,3\}, \{1,2,3,4\}, \{1,3,4\}\Big{\}}$;
    \item $\mathcal{B}_{8}=\Big{\{}\{1\}, \{2\}, \{3\}, \{4\}, \{1,2\}, \{1,2,3\}, \{1,2,3,4\}, \{1,3\}\Big{\}}$;
    \item $\mathcal{B}_{9}=\Big{\{}\{1\}, \{2\}, \{3\}, \{4\}, \{1,2,3,4\}, \{1,2,3\}, \{1,2,4\}, \{1,3,4\}\Big{\}}$;
    \item $\mathcal{B}_{10}=\Big{\{}\{1\}, \{2\}, \{3\}, \{4\}, \{1,2\}, \{1,2,3\}, \{1,2,3,4\} \Big{\}}$;
    \item $\mathcal{B}_{11} = \Big{\{}\{1\}, \{2\}, \{3\}, \{4\}, \{1,2,3\}, \{2,3,4\}, \{1,2,3,4\} \Big{\}}$;
    \item $\mathcal{B}_{12}=\Big{\{}\{1\}, \{2\}, \{3\}, \{4\}, \{1,2,3,4\}, \{1,2,3\} \Big{\}}$;
    \item $\mathcal{B}_{13}=\Big{\{}\{1\}, \{2\}, \{3\}, \{4\}, \{1,2,3,4\} \Big{\}}$.
\end{itemize}
\normalsize
($\mathcal{B}_{6}$ does not exist by Theorem~\ref{NotDelzantTheorem1}).

By Proposition \ref{DelzantP6}, the polytope $\EuScript{P}_{6}$ is not a nestohedron but admits a Delzant realization in $\R^{3}$.
\end{proof}


\section{General case: Buchstaber number}

In this section we turn to a discussion of the combinatorial properties of Bier spheres related to
the topological properties of their polyhedral products studied in the framework of toric topology.

The key notion of study in toric topology~\cite{BPAm, Buchstaber-Panov} is that of a polyhedral product. A particularly important case of a polyhedral product for us here is the following object.
\begin{definition}
The \emph{moment-angle-complex} of an $(n-1)$-dimensional simplicial complex $\sK$ on $[m]$ is a space
$$
\mathcal{Z}_{\mathcal{K}}=(\mathbb D^2, \mathbb S^1)^{\sK}:=\bigcup_{I\in \mathcal{K}}(\mathbb D^2, \mathbb S^1)^I\subseteq(\mathbb D^{2})^{m},
$$
where
$$
(\mathbb D^2, \mathbb S^1)^I:=\prod\limits_{i=1}^m\,X_i, \text{ for }X_i=\mathbb D^2, \text{ if }i\in I,\text{ and }X_i=\mathbb S^1, \text{ otherwise.}
$$

Here we denote by $\mathbb D^2=\{z\in\C\,|\,|z|\leq 1\}$ the unit disc in the complex plane and by $\mathbb S^1$ its boundary. Note that $\zk$ can be naturally viewed as an $(m+n)$-dimensional cellular subspace in the unitary polydisc $(\mathbb D^{2})^{m}$ in the complex Euclidean space $\C^m$.

Similarly one defines the {\emph{real moment-angle-complex}} of $\sK$ to be $\mathcal{R}_{\mathcal{K}}=(\mathbb D^1, \mathbb S^0)^{\sK}$. This is an $n$-dimensional cellular subspace in the unitary cube $[-1;+1]^n$ in the real Euclidean space $\R^n$.
\end{definition}

In case of a polytopal sphere $K_P=\partial P^*$ corresponding to a simple $n$-dimensional polytope $P$, its moment-angle-complex can be realized as a complete intersection of Hermitian quadrics in $\C^m$ and therefore it acquires a smooth structure. This smooth closed $(m+n)$-dimensional manifold is called the {\emph{moment-angle manifold}} of $P$ and is denoted by $\zp$. The {\emph{real moment-angle manifold}} $\rp$ is defined similarly.

Observe that the torus $T^m:=(\mathbb S^1)^m$ acts on $\zk$ for any simplicial complex $\sK$ by the restriction of the natural coordinatewise action of $T^m$ on $\C^m$. Similarly, the real torus $\Z_2^m$ acts on $\rk$.

\begin{definition}
We call the {\emph{Buchstaber number}} of $\sK$ the maximal rank $s(\sK)$ of a toric subgroup in $T^m$ acting freely on $\zk$. Similarly, we call the {\emph{real Buchstaber number}} of $\sK$ the maximal rank $s_{\R}(\sK)$ of a real toric subgroup in $\Z_2^m$ acting freely on $\rk$.    
\end{definition}

It is well-known that for any $(n-1)$-dimensional simplicial complex $\sK$ on $[m]$, both $s(\sK)$ and $s_\R(\sK)$ are combinatorial invariants of $\sK$. The real Buchstaber number was introduced in~\cite{FM}, where it was proved that: 
$$
1\leq s(\sK)\leq s_\R(\sK)\leq m-n,
$$
see also~\cite[Proposition 3]{Ayz10} for an alternative proof.

\begin{lemma}
Let $\sK_1$ and $\sK_2$ be simplicial complexes on $[m]$ and suppose that their deleted join $\sK$ has dimension $m-2$. Then 
$$
s(\sK)=s_\R(\sK)=m+1.
$$
\end{lemma}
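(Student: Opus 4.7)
The strategy is to match the general upper bound
$$
s(\sK)\le s_{\R}(\sK)\le 2m-\dim\sK-1=m+1
$$
of~\cite{FM} by exhibiting a characteristic map with the required unimodularity on every facet. Concretely, I will produce an integer matrix $\Lambda\in\mathrm{Mat}_{(m-1)\times 2m}(\Z)$ with columns indexed by $[m]\sqcup[m']$ whose $(m-1)\times(m-1)$ submatrix on any facet $\sigma\uplus\tau$ of $\sK$ has determinant $\pm 1$. The kernel of the induced quotient map $T^{2m}\to T^{m-1}$ is then a subtorus of rank $m+1$ acting freely on $\mathcal Z_{\sK}$ (freeness amounts to the restriction of the quotient to each facet subtorus $T^{\sigma\uplus\tau}$ being an isomorphism, which is exactly the unimodularity of the corresponding minor), and its mod-$2$ reduction produces a rank $m+1$ subgroup of $\Z_2^{2m}$ acting freely on $\mathcal R_{\sK}$.

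The choice of $\Lambda$ is the most symmetric one available. Fix the standard basis $e_1,\dots,e_{m-1}$ of $\Z^{m-1}$, set $e_m:=-(e_1+\dots+e_{m-1})$, and declare
$$
\lambda(x_i)=\lambda(y_i):=e_i,\qquad 1\le i\le m.
$$
The key elementary observation is that any $(m-1)$-subset of $\{e_1,\dots,e_m\}$ is a $\Z$-basis of $\Z^{m-1}$: deleting $e_m$ leaves the standard basis, while deleting any $e_k$ with $k<m$ yields a matrix whose determinant equals $\pm 1$ by a one-line Laplace expansion. The same statement holds modulo $2$.

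It remains to verify the basis condition on every facet, which is the only genuinely combinatorial step. A facet $\sigma\uplus\tau$ of $\sK=\sK_1\ast_{\Delta}\sK_2$ satisfies $|\sigma|+|\tau|=\dim\sK+1=m-1$ together with $\sigma\cap\tau=\varnothing$; consequently, the list of its $\lambda$-values equals $\{e_i:i\in\sigma\cup\tau\}$, an $(m-1)$-subset of $\{e_1,\dots,e_m\}$ with no repetition, the disjointness condition being precisely what excludes the only possible collision $\lambda(x_i)=\lambda(y_i)$. By the observation above this is a $\Z$-basis of $\Z^{m-1}$, completing the construction. I expect no serious obstacle: the hypothesis $\dim\sK=m-2$ enters in only one place, namely the facet-size count $m-1$ that exactly matches the $m-1$ independent vectors selected from the $m$ available $e_i$'s, and the argument never distinguishes $\sK_1$ from $\sK_2$. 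In particular, specialising to $\sK_2=\widehat{\sK_1}$ will yield the Bier-sphere application needed for Theorem~\ref{BuchNumTheo}.
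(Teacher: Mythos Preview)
Your proof is correct and essentially identical to the paper's: both assign the same label to the paired vertices $i$ and $i'$, namely $e_i$ for $i<m$ and $\pm(e_1+\cdots+e_{m-1})$ for $i=m$, and then use that any $m-1$ of these $m$ vectors form a $\Z$-basis together with the deleted-join condition $\sigma\cap\tau=\varnothing$ to rule out repeated labels on a face. One small slip worth fixing: you assert that every facet satisfies $|\sigma|+|\tau|=m-1$, but the lemma does not assume $\sK$ is pure (and non-pure examples exist, e.g.\ $\sK_1=\{\varnothing,1,2,3,12\}$, $\sK_2=\{\varnothing,4'\}$ on $[4]$); the paper writes $\le m-1$ and verifies the ``part of a lattice basis'' condition instead, which your vectors automatically satisfy for any $k\le m-1$ distinct $e_i$'s, so the repair is immediate.
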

\begin{proof}
Throughout the proof we use the notation $\{1,2,\ldots,m\}$ for the vertices of $\sK_1$ and $\{1',2',\ldots,m'\}$ for the vertices of $\sK_2$.

Denote by $(e_1,\ldots,e_{m-1})$ the standard basis of the lattice $\Z^{m-1}$ and consider the following labelling of the vertices of $\sK_1$ and $\sK_2$:
$$
i,i'\mapsto e_{i},\text{ for all }1\leq i\leq m-1.
$$
$$
m,m'\mapsto e_1+e_2+\ldots+e_{m-1},
$$

Let $\sigma$ be a maximal simplex in $\sK$. Since $\dim \sK=m-2$, by definition, the number of vertices $|\sigma|$ of $\sigma$ is less or equal to $m-1$. 

Since $\sK$ is a deleted join, by definition, one and only one of the next three cases can occur:
\begin{enumerate}
\item $m\in\sigma$ and $m'\notin\sigma$;
\item $m'\in\sigma$ and $m\notin\sigma$;
\item $\{m,m'\}\cap\sigma=\varnothing$.
\end{enumerate}

For each of the first two cases of the three above, note that the labels of $\sigma$ form a part of the lattice basis in $\Z^{m-1}$, since the set of vectors:
$$
\{e_1+e_2+\ldots+e_{m-1},e_1,e_2,\ldots,\hat{e}_{i},\ldots,e_{m-1}\} 
$$
forms a lattice basis in $\Z^{m-1}$ for any $1\leq i\leq m-1$.

For the last one of the three cases above, note that the labels of $\sigma$ form a part of the lattice basis in $\Z^{m-1}$, since the set of vectors:
$$
\{e_1,e_2,\ldots,e_{m-1}\}
$$
forms a lattice basis in $\Z^{m-1}$.

Consider the linear map $\Lambda$ from $\Z^{2m}$ to $\Z^{m-1}$ induced by our labelling of $[m]\sqcup [m']$. It remains to observe that the kernel of the corresponding exponential map $\mathrm{exp}(\Lambda)$ between the corresponding (real) tori is a (real) toric subgroup $H(\Lambda)$ of dimension $2m-(m-1)=m+1$ in the $2m$-dimensional coordinate (real) torus, which acts freely on the (real) moment-angle-complex of $\sK$. This finishes the proof.
\end{proof}

The next result shows that both the real and complex Buchstaber numbers of a Bier sphere are always maximal possible.

\begin{theorem}\label{BuchNumTheo}
For any simplicial complex $\sK\neq\Delta_{[m]}$ with $m\geq 2$ vertices (including the ghost ones) we get:
$$
s(\B(\sK))=s_\R(\B(\sK))=m+1.
$$
\end{theorem}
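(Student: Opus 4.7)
The plan is to derive the theorem as a direct corollary of the preceding lemma by taking $\sK_1 := \sK$ and $\sK_2 := \widehat{\sK}$, so that the deleted join $\sK_1 *_\Delta \sK_2$ is, by definition, exactly the Bier sphere $\B(\sK)$ on the vertex set $[m]\sqcup[m']$. The sole hypothesis to verify is that this deleted join has dimension $m-2$, and this is precisely the classical theorem of Bier recalled in the introduction (and proved independently in~\cite{Matousek, Longueville}): for any simplicial complex $\sK\neq\Delta_{[m]}$ on $[m]$ with $m\geq 2$, the complex $\B(\sK)$ is a PL-sphere of dimension $m-2$. Hence the lemma applies and yields both equalities at once.

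For the upper bound, I would just note that $\B(\sK)$ has $2m$ vertices (counting ghosts) and is $(m-2)$-dimensional, so the general inequality $s(\B(\sK))\leq s_\R(\B(\sK))\leq 2m-(m-1) = m+1$ from~\cite{FM} recalled just before the lemma is automatic. The matching lower bound is exactly what the lemma produces: its proof constructs an explicit labelling $\Lambda\colon\Z^{2m}\to\Z^{m-1}$ of the vertices of $\B(\sK)$ such that for every maximal simplex $\sigma$ the restricted labels form part of a $\Z$-basis, and this is precisely the combinatorial condition for the kernel $H(\Lambda)$ of the induced exponential map $\mathrm{exp}(\Lambda)\colon T^{2m}\to T^{m-1}$ to act freely on $\zk[\B(\sK)]$ (and its mod-$2$ reduction to act freely on $\rk[\B(\sK)]$). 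Since $\dim H(\Lambda) = 2m-(m-1) = m+1$, this simultaneously gives $s(\B(\sK))\geq m+1$ and $s_\R(\B(\sK))\geq m+1$.

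I do not foresee any real obstacle: the lemma is stated in the precise generality needed here, and the only conceivable subtlety is whether ghost vertices cause any trouble. They do not, because the labelling in the lemma's proof assigns a lattice basis vector to every vertex of $\B(\sK)$ regardless of whether it happens to be ghost in $\sK$ or in $\widehat{\sK}$; the freeness of the $H(\Lambda)$-action depends only on the simplices of $\B(\sK)$, which the trichotomy $m\in\sigma$ / $m'\in\sigma$ / $\{m,m'\}\cap\sigma=\varnothing$ used in the lemma's proof already handles verbatim.
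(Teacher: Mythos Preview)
Your proof is correct and follows exactly the same approach as the paper: set $\sK_1:=\sK$, $\sK_2:=\widehat{\sK}$, observe that their deleted join is $\B(\sK)$, which is a PL-sphere of dimension $m-2$ by Bier's theorem, and invoke the preceding lemma. Your additional remarks on the upper bound and on ghost vertices are accurate elaborations, but the paper's proof dispenses with them in a single sentence.
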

\begin{proof}
Set $\sK_1:=\sK$ and $\sK_2:=\widehat{\sK}$, its Alexander dual. Then their deleted join equals $\B(\sK)$, the Bier sphere of $\sK$, which is a PL-sphere of dimension $m-2$. Then, by the previous lemma, $\B(\sK)$ has the maximal possible (real) Buchstaber number. Q.E.D.    
\end{proof}


\section{Examples, applications and open problems}

In this section we consider some examples of applications of our main results and formulate
some open problems. Let us begin with the following example.

\begin{example}
Let $\sK$ be the $\ell$-skeleton of the simplex $\Delta_{[m]}$, that is: $\sK=\mathrm{sk}^{\ell}(\Delta^{m-1})$, where $0\leq\ell\leq m-3$. Then its Alexander dual is $\widehat{\sK}=\mathrm{sk}^{m-\ell-3}(\Delta^{m-1})$. 

Applying Theorem~\ref{BuchNumTheo}, we get that $\B(\sK)$ has $2m$ geometrical ('real') vertices and the Buchstaber numbers $s(\B(\sK))=s_\R(\B(\sK))=m+1$, for all $m\geq 3$ and any $0\leq\ell\leq m-3$.    
\end{example}

If $P$ is a simple $n$-dimensional polytope with $m$ facets having Buchstaber number $s(P):=s(K_P)=m-n$, then a {\emph{characteristic map}} $\Lambda\colon\Z^m\to\Z^n$ exists; that is, each set of columns of $\Lambda$ corresponding to a set of facets of $P$ intersecting at a vertex of $P$ forms an integer basis of $\Z^n$. Therefore, if we denote by $H(\Lambda)$ the kernel of the exponential map $\mathrm{exp}(\Lambda)$, then the $(m-n)$-dimensional torus $H(\Lambda)$ acts smoothly and freely on the moment-angle manifold $\zp$ of $P$. 

\begin{definition}
Let $P$ be an $n$-dimensional simple polytope with $m$ facets. In case $s(P)=m-n$ the quotient space $\zp/H(\Lambda)$ is a $2n$-dimensional smooth manifold called a {\emph{quasitoric manifold}} of $P$ and denoted by $M(P,\Lambda)$. In case $s_\R(P)=m-n$, its real analogue, an $n$-dimensional smooth manifold $M_\R(P,\Lambda_\R)$ called a {\emph{small cover}} of $P$ is defined in a similar way. It is well-known that the canonical quasitoric manifold (respectively, small cover) of a Delzant polytope is a smooth projective toric variety over $\mathbb{C}$ (respectively, over $\mathbb{R}$).
\end{definition}

Each Delzant polytope $P$ has a regular normal fan and therefore gives rise to a {\emph{canonical
characteristic map}} $\Lambda_{P}$: the columns of the integer matrix of the map $\Lambda_P$ are the primitive normal vectors to facets of $P$. We denote the corresponding quasitoric manifold by $\EuScript{M}=M(P,\Lambda_{P})$. The reduction of the matrix $\Lambda_{P}$ modulo $2$ yields a {\emph{canonical characteristic map}} $\overline{\Lambda}_{P}$ and the corresponding small cover $\overline{\EuScript{M}}=M_\R(P,\overline{\Lambda}_{P})$. 

Applying Theorem~\ref{MainClassTheo}, we get $13$ characteristic maps $\Lambda_{\EuScript{P}_i}$, quasitoric manifolds $\EuScript{M}_{i}$ and small covers $\overline{\EuScript{M}_{i}}$. Note that for $i\ne 6$ they are canonical for the canonical Delzant realizations of $\EuScript{P}_{i}$, described above. In the case of $i=6$ we take the realization from Proposition \ref{DelzantP6}. 

In~\cite[Theorem 4.5.2]{Fenn} the following important result was obtained.

\begin{theorem}[{\cite{Fenn}}]
Let $P$ be a nestohedron with a connected building set $\mathcal{B}$. The column
of $\Lambda_{P}$ corresponding to a facet labelled by a set $I \in  \mathcal{B} \backslash \mathcal{B}_{\max}$ is equal to $(v_1,\ldots,v_n)^t$, where
\begin{equation*}
v_{i} = 
 \begin{cases}
   1 \quad i\in I,\text{ }n+1\notin I;\\
   0 \quad i\in I,\text{ }n+1\in I;\\
   0 \quad i\notin I,\text{ }n+1\notin I;\\
   -1 \quad i\notin I,\text{ }n+1\in I.\\
 \end{cases}
\end{equation*}
\end{theorem}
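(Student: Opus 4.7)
The plan is to reduce the statement to an elementary computation after fixing a convenient lattice identification of the ambient affine hyperplane of $P_{\mathcal{B}}$ with $\mathbb{R}^{n}$. By the proposition cited just above the theorem, $P_{\mathcal{B}}$ is cut out, inside $\Pi = \{x \in \mathbb{R}^{n+1} : \sum_{i=1}^{n+1} x_i = |\mathcal{B}|\}$, by the half-spaces $\sum_{i \in S} x_i \geq |\mathcal{B}|_S|$ for $S \in \mathcal{B}$ with $S \ne [n+1]$; since $\mathcal{B}$ is connected, each of these inequalities is facet-defining, so the facets are in bijection with $\mathcal{B}\setminus\mathcal{B}_{\max}$. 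I would identify the affine lattice $\Pi\cap\mathbb{Z}^{n+1}$ with $\mathbb{Z}^{n}$ via the projection $\pi$ that forgets the last coordinate, whose inverse sends $(y_1,\ldots,y_n)$ to $(y_1,\ldots,y_n,\,|\mathcal{B}|-\sum_{j=1}^{n} y_j)$; this is manifestly a lattice isomorphism because $x_{n+1}$ is uniquely determined by $x_1,\ldots,x_n$ on $\Pi$.

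The core step is then a direct case analysis on whether $n+1\in I$. If $n+1\notin I$, substitution leaves the left-hand side unchanged, $\sum_{i\in I}x_i=\sum_{i\in I}y_i$, so the inward primitive normal to $F_I$ in the coordinates on $\mathbb{Z}^n$ is the indicator vector $\chi_I\in\{0,1\}^n$, which is exactly the formula's cases $v_i=1$ for $i\in I$ and $v_i=0$ for $i\notin I$. If $n+1\in I$, substitution gives
$$\sum_{i\in I}x_i \;=\; \sum_{i\in I\cap[n]} y_i \;+\; \Bigl(|\mathcal{B}|-\sum_{j=1}^{n} y_j\Bigr) \;=\; |\mathcal{B}| \;-\; \sum_{j\in[n]\setminus I} y_j,$$
so the half-space becomes $-\sum_{j\in[n]\setminus I}y_j \;\geq\; |\mathcal{B}|_I|-|\mathcal{B}|$, whose inward primitive normal is $-\chi_{[n]\setminus I}$; this matches the remaining two cases $v_i=0$ for $i\in I$ and $v_i=-1$ for $i\notin I$.

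To upgrade this list of primitive normals into an actual characteristic map, I would invoke Lemma~\ref{lemma}: it exhibits $P_{\mathcal{B}}$ as a sequence of face truncations of the $n$-simplex, so its normal fan is regular and, at every vertex, the collection of primitive inward normals of the incident facets forms a $\mathbb{Z}$-basis of the lattice. Consequently, assembling the vectors computed above into the columns of an $n\times(|\mathcal{B}|-1)$ integer matrix produces a bona fide $\Lambda_{P_{\mathcal{B}}}$, and each column is primitive since its nonzero entries all have the same sign and $I$ is neither empty nor equal to $[n+1]$.

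There is no genuine obstacle in this argument: the whole proof is a one-line substitution repeated twice. The only things that demand care are the sign convention (inward rather than outward normals) and the asymmetry forced by the projection $\pi$ between the coordinates $1,\ldots,n$ and the distinguished coordinate $n+1$ that is being eliminated; any other choice of lattice isomorphism $\Pi\cap\mathbb{Z}^{n+1}\cong\mathbb{Z}^n$ would yield an $\mathit{equivalent}$ characteristic map in the sense of $\mathrm{GL}_n(\mathbb{Z})$, but only Fenn's choice gives the explicit $\{0,\pm1\}$-valued formula in the statement.
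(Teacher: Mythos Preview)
The paper does not prove this theorem at all; it merely quotes it from Fenn's thesis and then applies it. Your argument is correct and is in fact the natural one: the proposition on the facet description of $P_{\mathcal{B}}$ gives the inequalities in $\mathbb{R}^{n+1}$, the projection $\pi$ forgetting $x_{n+1}$ is a lattice isomorphism onto $\mathbb{Z}^n$, and the two-case substitution you wrote down is exactly right. The primitivity check is also fine, since $I\neq\varnothing$ handles the case $n+1\notin I$, while $I\neq[n+1]$ together with $n+1\in I$ forces $[n]\setminus I\neq\varnothing$ in the other case.

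One small remark: your appeal to Lemma~\ref{lemma} to certify that the normal fan is regular is not strictly needed for the statement as written, which only asks for the \emph{columns} of $\Lambda_P$; once you know $P_{\mathcal B}$ is Delzant (which the paper takes for granted from the same body of results), the primitive inward normals \emph{are} by definition the columns of the canonical characteristic matrix, and your computation identifies them. Invoking the lemma does no harm, but the logical dependence is lighter than you suggest.
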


This result and Theorem~\ref{MainClassTheo} provide explicit forms of the canonical characteristic matrices $\Lambda_{\EuScript{P}_{i}}$ for all $1\leq i\leq 13$. We refer to \hyperlink{Appendix B}{Appendix B}, where we collect all those matrices.

The next two examples are concerned with the topology of manifolds $\EuScript{M}_{i}$ and $\overline{\EuScript{M}_{i}}$. For the first one we need the following classical result~\cite[Theorem 4.14]{Davis-Janiszkiewicz}.

\begin{theorem}[\cite{Davis-Janiszkiewicz}]
Let $\EuScript{M} = M(P, \Lambda)$ be a quasitoric manifold with $\Lambda=(\lambda_{ij})$, $1\leq i\leq n$, $1\leq j\leq m$. Then the integer cohomology ring of $\EuScript{M}$ is given by
$$
H^{*}(\EuScript{M})\cong \mathbb{Z}[v_{1},\ldots,v_{m}]/\mathcal{I},
$$
where $v_{i}\in H^{2}(\EuScript{M})$ is the class Poincar\'e dual to the characteristic submanifold $\EuScript{M}_{i}$ and $\mathcal{I}$ is the ideal generated by elements of the following two types:
\begin{enumerate}
    \item the square-free monomials $v_{i_{1}}\ldots v_{i_{k}}$, whenever $\EuScript{M}_{i_{1}}\cap \ldots \cap \EuScript{M}_{i_{k}}=\varnothing$ (the Stanley--Reisner relations);
    \item the linear forms $t_{i}=\lambda_{i1}v_{1}+\ldots+\lambda_{im}v_{m}$, $1\leq i\leq n$.
\end{enumerate}
\end{theorem}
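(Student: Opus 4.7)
The plan is to deduce the theorem from the classical Borel construction, building a bridge between the quasitoric manifold $\EuScript{M}$ and the Davis--Januszkiewicz space $DJ(\mathcal{K}_P) \simeq (\mathbb{C}\mathrm{P}^{\infty})^{\mathcal{K}_P}$. First I would recall that, since $s(P) = m-n$, the subtorus $H(\Lambda) \subset T^m$ is a freely acting $(m-n)$-torus on $\mathcal{Z}_P$, and the residual quotient torus $T^n = T^m / H(\Lambda)$ acts smoothly on $\EuScript{M} = \mathcal{Z}_P / H(\Lambda)$ with orbit space $P$. Thus the Borel construction for the $T^n$-action fits into a fibration
$$
\EuScript{M} \longrightarrow ET^n \times_{T^n} \EuScript{M} \longrightarrow BT^n.
$$
The central observation is that the total space of this fibration is homotopy equivalent to $ET^m \times_{T^m} \mathcal{Z}_P$, which is precisely $DJ(\mathcal{K}_P)$, whose integer cohomology is known to be the Stanley--Reisner ring $\mathbb{Z}[\mathcal{K}_P] = \mathbb{Z}[v_1, \ldots, v_m]/\mathcal{I}_{\mathrm{SR}}$, with $v_i$ the class pulled back from $H^2(BS^1)$ along the $i$-th projection.

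Next, I would show that the Serre spectral sequence of the fibration above collapses at $E_2$. The key input is that $\EuScript{M}$ admits a CW decomposition with cells only in even dimensions, obtained by choosing a generic linear functional on the polytope $P$ and using the Morse-theoretic stratification of $P$ coming from the moment map (equivalently, the Bruhat-type decomposition into cells indexed by vertices of $P$). Consequently $H^*(\EuScript{M})$ is concentrated in even degrees and is free abelian, so the Leray--Hirsch theorem applies: any choice of classes in $H^*(ET^n \times_{T^n} \EuScript{M})$ that restrict to a basis of $H^*(\EuScript{M})$ gives a free $H^*(BT^n)$-module structure, and the $E_2$-page is $H^*(BT^n) \otimes H^*(\EuScript{M})$.

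The third step is to identify the image of $H^*(BT^n) \to H^*(DJ(\mathcal{K}_P))$ precisely. The composition
$$
BT^n \longrightarrow ET^n \times_{T^n} \EuScript{M} \simeq DJ(\mathcal{K}_P)
$$
is classified on $\pi_1$-level by the dual of the epimorphism $\Z^m \to \Z^n$ associated with $\Lambda$. Consequently, the pullback of the $i$-th standard generator of $H^2(BT^n)$ is the linear form $t_i = \lambda_{i1} v_1 + \cdots + \lambda_{im} v_m$. Since $\Lambda$ has rank $n$, the classes $t_1, \ldots, t_n$ form a regular sequence in $\mathbb{Z}[\mathcal{K}_P]$ (an easy consequence of the fact that $\Lambda$ restricted to any vertex-cone is a $\mathbb{Z}$-basis of $\Z^n$, which in turn forces the $t_i$'s to be a linear system of parameters).

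Finally, I would read off the cohomology of the fiber from the collapsed spectral sequence: since the total cohomology is the Stanley--Reisner ring modulo the image of $H^{>0}(BT^n)$, one obtains
$$
H^*(\EuScript{M}) \cong \mathbb{Z}[v_1, \ldots, v_m] \big/ \bigl(\mathcal{I}_{\mathrm{SR}} + (t_1, \ldots, t_n)\bigr),
$$
which is exactly the statement of the theorem. The main obstacle I anticipate is the collapsing of the Serre spectral sequence over $\mathbb{Z}$ and the verification that $(t_1, \ldots, t_n)$ is regular on $\mathbb{Z}[\mathcal{K}_P]$; the cleanest route to both is the explicit cellular model via the vertex-indexed Bruhat decomposition, which simultaneously gives a free $\Z$-basis for $H^*(\EuScript{M})$ and the fiber-bundle triviality needed for Leray--Hirsch.
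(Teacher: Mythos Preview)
The paper does not give its own proof of this statement: it is quoted verbatim as a classical result of Davis and Januszkiewicz, with the remark that for projective toric varieties it is the Danilov--Jurkiewicz theorem. So there is nothing in the paper to compare your argument against.

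That said, your sketch is exactly the standard proof one finds in the literature (for instance in the Buchstaber--Panov monograph cited by the paper): identify the Borel construction $ET^n\times_{T^n}\EuScript{M}$ with the Davis--Januszkiewicz space $DJ(\mathcal{K}_P)$, use the even-dimensional CW structure on $\EuScript{M}$ coming from a generic height function on $P$ to force the Serre spectral sequence to collapse, and read off the quotient by the linear forms $t_i$ as the image of $H^*(BT^n)$. One small point worth tightening: the Leray--Hirsch argument over $\mathbb{Z}$ requires not just that $H^*(\EuScript{M})$ is free and even-graded but that the restriction $H^*(DJ(\mathcal{K}_P))\to H^*(\EuScript{M})$ is surjective; this is usually obtained directly from the cell decomposition (each $2k$-cell of $\EuScript{M}$ is the closure of a $T^n$-orbit over a face of $P$, hence comes from a face monomial in the Stanley--Reisner ring), rather than as a consequence of collapse. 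With that adjustment your outline is complete and matches the original Davis--Januszkiewicz argument.
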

The statement of the last theorem for projective toric varieties is known as the Danilov-Jurkiewicz theorem, see~\cite[Theorem 10.8]{Danilov}.
\begin{example}\label{QuasitoricEx}
It can be deduced from the previous result or showed from the cellular structure on a quasitoric manifold directly, that the integer homology groups of a quasitoric manifold are free abelian, they can be non-zero only in even dimensions and the Betti numbers $\beta_{2k}(M(P,\Lambda))=h_k(P)$ for all $0\leq k\leq n$. 

We compute the $f$-vectors of all the 13 Bier spheres $\EuScript{S}_{i}$, see \hyperlink{Appendix A}{Appendix A}. Using them, one can immediately restore the corresponding $h$-vectors and show that the six-dimensional quasitoric
manifolds $\EuScript{M}_{i}$ for $1 \le i \le 13$ are nonsingular projective toric varieties with the integer homology groups shown in Table 3.  

\begin{center}
\begin{table}[h!]
\resizebox{12cm}{!}{
\begin{tabular}{ | c | c | c | c | c | }
\hline
 $H_{0}$ & $H_{2}$ & $H_{4}$ & $H_{6}$ & $\EuScript{M}_{i}$ \\ \hline
 $\mathbb{Z}$ & $\mathbb{Z}^{5}$ & $\mathbb{Z}^{5}$ & $\mathbb{Z}$ & $i=1,2,3,4,5,6$ \\ \hline 
 $\mathbb{Z}$ & $\mathbb{Z}^{4}$ & $\mathbb{Z}^{4}$ & $\mathbb{Z}$ & $i=7,8,9$ \\ \hline
 $\mathbb{Z}$ & $\mathbb{Z}^{3}$ & $\mathbb{Z}^{3}$ & $\mathbb{Z}$ & $i=10,11$ \\ \hline 
 $\mathbb{Z}$ & $\mathbb{Z}^{2}$ & $\mathbb{Z}^{2}$ & $\mathbb{Z}$ & $i=12$\\ \hline 
 $\mathbb{Z}$ & $\mathbb{Z}$ & $\mathbb{Z}$ & $\mathbb{Z}$ & $i=13$ \\ \hline 
\end{tabular}}
\caption{Integral homology groups of the quasitoric manifolds $\EuScript{M}_{i}$}
\end{table}
\end{center}

\end{example}

We discuss the orientability of the small covers over the two-dimensional Bier spheres in the following example.

\begin{example}\label{SmallCoverEx}
Using the criterion~\cite[Theorem 1.7]{Nakayama-Nishimura}, one can easily check the orientability of the small covers $\overline{\EuScript{M}_{i}}$ for $1\leq i\leq 13$. 

Indeed, by that criterion, the 3-dimensional smooth manifold $\overline{\EuScript{M}_{i}}$ is orientable if and only if there exists a basis $\{e_{1},e_{2},e_{3}\}$ of the space $\mathbb{Z}^{3}_{2}$ such that the image of $\overline{\Lambda}_{\EuScript{P}_{i}}$ is contained in the set $\{e_{1},e_{2},e_{3},e_{1}+e_{2}+e_{3}\}$.

Thus, the mod 2 reduction of the characteristic matrices collected in \hyperlink{Appendix B}{Appendix B} shows that among the $3$-dimensional small covers $\overline{\EuScript{M}_{i}}$, the 1st, 6th, 9th, 11th, 12th and 13th are orientable and the rest are not.
\end{example}

Finally, we state a few open problems relevant to the content of the results introduced above.

\begin{prob}
Find a simplicial complex $\sK$ such that its Bier sphere $\B(\sK)$ is non-polytopal. 
\end{prob}

\begin{prob}
Classify all the Bier spheres combinatorially equivalent to nerve complexes of generalized truncation polytopes.    
\end{prob}

\begin{prob}
Classify all the Bier spheres combinatorially equivalent to nerve complexes of nestohedra.    
\end{prob}

\newpage


\section{Appendix A}\hypertarget{Appendix A}
Here we collect the combinatorial data for two-dimensional Bier spheres.
\subsubsection*{$\mathrm{MF}$ table of the two-dimensional Bier spheres}
\begin{flushleft}
\resizebox{17cm}{!}{
\begin{tabular}{ | c | c | }
\hline
 $i$ & $\mathrm{MF}(\EuScript{S}_{i})$  \\ \hline
 $1$ & $x_{1}x_{2}, x_{1}x_{3}, x_{1}x_{4}, x_{2}x_{3}, x_{2}x_{4}, x_{3}x_{4}, y_{2}y_{3}y_{4}, y_{1}y_{3}y_{4}, y_{1}y_{2}y_{4},$$ $$ y_{1}y_{2}y_{3}, x_{1}y_{1},x_{2}y_{2}, x_{3}y_{3}, x_{4}y_{4}$ \\ \hline $2$ & $x_{3}x_{4}, x_{1}x_{3}, x_{1}x_{4}, x_{2}x_{4}, x_{2}x_{3}, y_{3}y_{4}, y_{1}y_{2}y_{4}, y_{1}y_{2}y_{3}, x_{1}y_{1}, x_{2}y_{2}, x_{3}y_{3}, x_{4}y_{4}$ \\ \hline $3$ &  $x_{3}x_{4}, x_{1}x_{3}, x_{1}x_{4}, x_{2}x_{4}, y_{3}y_{4}, y_{1}y_{4}, y_{1}y_{2}y_{3},$$ $$x_{1}y_{1}, x_{2}y_{2}, x_{3}y_{3}, x_{4}y_{4}$ \\ \hline $4$ & $x_{2}x_{3}, x_{1}x_{3}, x_{1}x_{4}, x_{2}x_{4}, y_{3}y_{4}, y_{1}y_{2}, x_{1}y_{1}, x_{2}y_{2}, x_{3}y_{3}, x_{4}y_{4}$ \\ \hline $5$ &  $x_{3}x_{4}, x_{2}x_{3}, x_{2}x_{4}, y_{3}y_{4}, y_{2}y_{4}, y_{2}y_{3}, x_{1}y_{1}, x_{2}y_{2}, x_{3}y_{3}, x_{4}y_{4}$ \\ \hline 
 $6$ & $x_{1}x_{2}x_{3}, x_{2}x_{4}, x_{3}x_{4}, x_{1}x_{4}, y_{3}y_{4}, y_{2}y_{4}, y_{1}y_{4}, y_{1}y_{2}y_{3},$$ $$x_{1}y_{1}, x_{2}y_{2}, x_{3}y_{3}, x_{4}y_{4}$ \\ \hline
 $7$ & $x_{1}x_{4}, x_{2}x_{4}, x_{3}x_{4},y_{4}, y_{1}y_{2}y_{3}, x_{1}y_{1}, x_{2}y_{2}, x_{3}y_{3}$ \\ \hline 
 $8$ & $x_{2}x_{4}, x_{3}x_{4}, y_{4}, y_{2}y_{3}, x_{1}y_{1}, x_{2}y_{2}, x_{3}y_{3}$
 \\ \hline $9$ &  $x_{1}x_{2}x_{4}, x_{2}x_{3}x_{4}, x_{1}x_{3}x_{4}, y_{4}, y_{2}y_{3}, y_{1}y_{3}, y_{1}y_{2}, $$ $$x_{1}y_{1}, x_{2}y_{2}, x_{3}y_{3}$ \\ \hline 
 $10$ & $x_{3}x_{4}, y_{4}, y_{3}, x_{1}y_{1}, x_{2}y_{2}$ \\ \hline  
 $11$ & $x_{1}x_{3}x_{4}, x_{2}x_{3}x_{4}, y_{4}, y_{3}, y_{1}y_{2}, x_{1}y_{1}, x_{2}y_{2}$ \\ \hline 
 $12$ & $x_{1}x_{3}x_{4}, y_{4}, y_{3}, y_{1}, x_{2}y_{2}$ \\ \hline  
 $13$ & $x_{1}x_{2}x_{3}x_{4},y_{4}, y_{3}, y_{1}, y_{2}$\\\hline
\end{tabular}
}
\end{flushleft}

\subsubsection*{$f$-vectors in the flag case.}
\begin{flushleft}
\resizebox{3.5cm}{!}{
\begin{tabular}{ | c | c | }
\hline
 $i$ & $f(\EuScript{S}_{i})$  \\ \hline
$4$ & $(8, 18, 12)$ \\ \hline $$5$$ & $(8, 18, 12)$ \\ \hline $8$ & $(7, 15, 10)$ \\ \hline $10$ &  $(6, 12, 8)$ \\ \hline
\end{tabular}
}
\end{flushleft}
\subsubsection*{$f$-vectors in the non-flag case.}
\begin{flushleft}
\resizebox{3.5cm}{!}{
\begin{tabular}{ | c | c | }
\hline
 $i$ & $f(\EuScript{S}_{i})$  \\ \hline
$1$ & $(8, 18, 12)$ \\ \hline $2$ & $(8, 18, 12)$  \\ \hline $3$ &  $(8,18,12)$ \\ \hline 
$6$ & $(8,18,12)$ \\ \hline
$7$ & $(7,15, 10)$
\\ \hline $9$ & $(7, 15, 10)$  \\ \hline $11$ &  $(6, 12, 8)$\\ \hline 
$12$ & $(5,9,6)$ \\ \hline $13$ & $(4, 6, 4)$ \\ \hline
\end{tabular}
}
\end{flushleft}

\newpage


\section{Appendix B}\hypertarget{Appendix B}
Here we collect the characteristic matrices $\Lambda_{\EuScript{P}_{i}}$ of $\EuScript{P}_{i}$ for $1\leq i\leq 13$. \footnotesize
\begin{enumerate}
    \item $\Lambda_{\EuScript{P}_{1}}=\begin{pmatrix}1 & 0 & 0 & -1& 0 & 0& 1& -1\\
    0 & 1 & 0 & -1 & -1 & 0 & 1 & 0\\
    0 & 0 & 1 & -1 & 0 & -1 & 1 & 0\end{pmatrix}\in \text{Mat}_{3\times 8}(\mathbb{Z});$
    \item
    $\Lambda_{\EuScript{P}_{2}}=\begin{pmatrix}
       1 & 0 & 0 & -1& 1 & 1& 0& -1\\
    0 & 1 & 0 & -1 & 1 & 1 & -1 & 0\\
    0 & 0 & 1 & -1 & 0 & 1 & 0 & 0
    \end{pmatrix}\in \mathrm{Mat}_{3\times 8}(\mathbb{Z});$
    \item
    $\Lambda_{\EuScript{P}_{3}}=\begin{pmatrix}
    1 & 0 & 0 & -1 & 1 & 1 & 0 & 0\\
    0 & 1 & 0 & -1 & 1 & 1 & 1 & -1\\
    0 & 0 & 1 & -1 & 0 & 1 & 1 & 0
    \end{pmatrix}\in \text{Mat}_{3\times 8}(\mathbb{Z});$
    \item
    $\Lambda_{\EuScript{P}_{4}}=\begin{pmatrix}
        1 & 0 & 0 & -1 & 1 & 1 & -1 & 0\\ 0 & 1 & 0 & -1 & 1 & 1 & -1 & -1\\ 0 & 0 & 1 & -1 & 0 & 1 & 0 & 0
    \end{pmatrix}\in \mathrm{Mat}_{3\times 8}(\mathbb{Z});$
    \item
    $\Lambda_{\EuScript{P}_{5}}=\begin{pmatrix}
        1 & 0 & 0 & -1 & 1 & 1 & -1 & 0\\
        0 & 1 & 0 & -1 & 1 & 1 & -1 & 0\\
        0 & 0 & 1 & -1 & 0 & 1 & 0 & -1  
    \end{pmatrix}\in \mathrm{Mat}_{3\times 8}(\mathbb{Z});$
    \item
    $\Lambda_{\EuScript{P}_{6}}=\begin{pmatrix}
    1 & 0 & 0 & -1 & 0 & 0 & 1 & -1\\
    0 & 1 & 0 & 0 & -1 & 0 & 1 & -1\\
    0 & 0 & 1 & 0 & 0 & -1 & 1 & -1
    \end{pmatrix}\in \text{Mat}_{3\times 8}(\mathbb{Z});$
    \item 
    $\Lambda_{\EuScript{P}_{7}}=\begin{pmatrix}
   1 & 0 & 0 & -1 & 1 & 1 & 0\\
    0 & 1 & 0 & -1 & 1 & 1 & -1\\
    0 & 0 & 1 & -1 & 0 & 1 & 0
    \end{pmatrix}\in \text{Mat}_{3\times 7}(\mathbb{Z});$
    \item
    $\Lambda_{\EuScript{P}_{8}}=\begin{pmatrix}
    1 & 0 & 0 & -1 & 1 & 1 & 1\\
    0 & 1 & 0 & -1 & 1 & 1 & 0\\
    0 & 0 & 1 & -1 & 0 & 1 & 1
    \end{pmatrix}\in \text{Mat}_{3\times 7}(\mathbb{Z});$
    \item
    $\Lambda_{\EuScript{P}_{9}}=\begin{pmatrix}
    1 & 0 & 0 & -1 & 1 & 0 & 0\\
    0 & 1 & 0 & -1 & 1 & 0 & -1\\
    0 & 0 & 1 & -1 & 1 & -1 & 0
    \end{pmatrix}\in \text{Mat}_{3\times 7}(\mathbb{Z});$
    \item
    $\Lambda_{\EuScript{P}_{10}}=\begin{pmatrix}
    1 & 0 & 0 & -1 & 1 & 1\\
    0 & 1 & 0 & -1 & 1 & 1\\
    0 & 0 & 1 & -1 & 0 & 1
    \end{pmatrix}\in \text{Mat}_{3\times 6}(\mathbb{Z});$
    \item
    $\Lambda_{\EuScript{P}_{11}}=\begin{pmatrix}
    1 & 0 & 0 & -1 & 1 & -1\\
    0 & 1 & 0 & -1 & 1 & 0\\
    0 & 0 & 1 & -1 & 1 & 0
    \end{pmatrix}\in \text{Mat}_{3\times 6}(\mathbb{Z});$
    \item
    $\Lambda_{\EuScript{P}_{12}}=\begin{pmatrix}
    1 & 0 & 0 & -1 & 1\\
    0 & 1 & 0 & -1 & 1\\
    0 & 0 & 1 & -1 & 1
    \end{pmatrix}\in \text{Mat}_{3\times 5}(\mathbb{Z});$
    \item
    $\Lambda_{\EuScript{P}_{13}}=\begin{pmatrix}
   1 & 0 & 0 & -1\\
    0 & 1 & 0 & -1\\
    0 & 0 & 1 & -1
    \end{pmatrix}\in \text{Mat}_{3\times 4}(\mathbb{Z});$
\end{enumerate}

\newpage 

\subsection*{Acknowledgements}
The authors are grateful to Anton Ayzenberg, Taras Panov, and Rade \v{Z}ivaljevi\'c for numerous fruitful discussions, valuable comments and suggestions. The authors also express their gratitude to Younghan Yoon and the anonymous referee for the comments and
suggestions that made it possible to improve the presentation.

Limonchenko was supported by the Serbian Ministry of Science, Innovations and Technological Development through the Mathematical Institute of the Serbian Academy of Sciences and Arts. Sergeev`s work was carried out within the project ``Mirror Laboratories'' of HSE University, Russian Federation.

\normalsize
\end{document}